\def\1{\bm{1}}
\def\va{{\bm{a}}}
\def\vb{{\bm{b}}}
\def\ve{{\bm{e}}}
\def\vs{{\bm{s}}}
\def\vu{{\bm{u}}}
\def\vv{{\bm{v}}}
\def\vw{{\bm{w}}}
\def\vx{{\bm{x}}}
\def\vy{{\bm{y}}}
\def\mA{{\bm{A}}}
\def\mB{{\bm{B}}}
\def\mC{{\bm{C}}}
\def\mF{{\bm{F}}}
\def\mG{{\bm{G}}}
\def\mH{{\bm{H}}}
\def\mI{{\bm{I}}}
\def\mJ{{\bm{J}}}
\def\mP{{\bm{P}}}
\def\mR{{\bm{R}}}
\def\mS{{\bm{S}}}
\def\mU{{\bm{U}}}
\def\mW{{\bm{W}}}
\def\mY{{\bm{Y}}}
\DeclareMathAlphabet{\mathsfit}{\encodingdefault}{\sfdefault}{m}{sl}
\SetMathAlphabet{\mathsfit}{bold}{\encodingdefault}{\sfdefault}{bx}{n}
\def\gS{{\mathcal{S}}}
\def\sP{{\mathbb{P}}}
\def\sR{{\mathbb{R}}}
\DeclareMathOperator*{\argmax}{arg\,max}
\def\tr{\mathrm{tr}}
\newtheorem{thm}{Theorem}[section]
\newtheorem{defn}[thm]{Definition}
\newtheorem{lemma}[thm]{Lemma}
\newtheorem{assume}[thm]{Assumption}
\newtheorem{remark}[thm]{Remark}
\newtheorem{prop}[thm]{Proposition}
\def\diag{\mathrm{diag}}
\newcommand{\norm}[1]{\left\|#1\right\|}
\newcommand{\dotprod}[1]{\left\langle #1\right\rangle}
\def\tr{\mathrm{tr}}
\def\cO{\mathcal{O}}
\def\cN{\mathcal{N}}
\def\EE{\mathbb{E}}
\def\RR{\mathbb{R}}
\def\sr{\mathrm{SR1}}
\def\aaa{\mathrm{AAA}}
\def\algname{\texttt{AAA}}
\begin{document}
\title{Anderson Acceleration Without Restart: A Novel Method with $n$-Step Super Quadratic Convergence Rate}
\author{
%	Haishan Ye
%	\thanks{School of Management; Xi'an Jiaotong University;
%		\texttt{yehaishan@xjtu.edu.cn};}
%\and
%Dachao Lin
%\thanks{Academy for Advanced Interdisciplinary Studies; Peking University;
%	\texttt{lindachao@pku.edu.cn};}
Haishan Ye \thanks{Equal Contribution.} \thanks{School of Management; Xi'an Jiaotong University}
	\and
	Dachao Lin \footnotemark[1] \thanks{School of Mathematical Sciences; Peking University}
	\and
	Xiangyu Chang \footnotemark[2]	
	\and
	Zhihua Zhang \footnotemark[3]
}
\maketitle

\begin{abstract}
In this paper, we propose a novel Anderson's acceleration method to solve nonlinear equations, which does \emph{not} require a restart strategy to achieve numerical stability.  
We propose the greedy and random versions of our algorithm. Specifically, the greedy version selects the direction to maximize a certain measure of progress for approximating the current Jacobian matrix. 
In contrast, the random version chooses the random Gaussian vector as the direction to update the approximate Jacobian.
Furthermore, our algorithm, including both greedy and random versions, has an $n$-step super quadratic convergence rate, where $n$ is the dimension of the objective problem. 
For example, the explicit convergence rate of the random version can be presented as $ \norm{\vx_{k+n+1} - \vx_*} / \norm{\vx_k- \vx_*}^2 = \cO\left(\left(1-\frac{1}{n}\right)^{kn}\right)$ for any $k\geq 0$ where $\vx_*$ is the optimum of the objective problem.
This kind of convergence rate is new to Anderson's acceleration and quasi-Newton methods.
The experiments also validate the fast convergence rate of our algorithm.
\end{abstract}

\section{Introduction}

This paper focuses on solving a set of nonlinear equations:
\begin{equation}\label{eq:prob}
\mF(\vx) =\left(f_1(\vx), \dots, f_n(\vx)\right)^\top = \bm{0}_n,
\end{equation}
where $\mF:\sR^n\to \sR^n$ represents a differentiable vector-valued function with its Jacobian $\mJ(\vx)$ defined by $[\mJ(\vx)]_{ij} = \frac{\partial f_i(\vx)}{\partial \vx_j}$ for all $i, j \in [n]:=\{1,2,\dots,n\}$. 
The nonlinear equation problem presented in Eq.~\eqref{eq:prob} is prevalent across various fields, including natural and social sciences \citep{willert2014leveraging,briceno2013monotone}. 
For instance, solving a nonlinear equation of the Bellman operator in the infinite-horizon discounted Markov Decision Process reveals the optimal policy~\citep{bertsekas2012dynamic}. 
Additionally, identifying stationary points in optimization problems serves as another pertinent example~\citep{xiao2018regularized}, underscoring the wide-ranging applicability of solving such nonlinear equation systems.

Given the significance of the problem presented in Eq.~\eqref{eq:prob}, and considering the inherent challenges in obtaining analytical solutions, a wide array of numerical methods has emerged. 
Anderson's acceleration and quasi-Newton methods, known for their fast convergence rates, are classical solutions to this problem \cite{anderson1965iterative,broyden1965class}. 
Anderson's acceleration, in particular, boasts diverse applications across computational physics \cite{willert2014leveraging}, computational chemistry \cite{brezinski2018shanks}, and reinforcement learning \cite{shi2019regularized}, attesting to its versatility and efficacy. 
Similarly, quasi-Newton methods have been foundational in numerous applications, underscoring their importance in computational analyses \cite{xiao2018regularized,schaefer2015stabilized,liu2017quasi}. In response to these methods' critical roles, several variants have been developed to enhance convergence rates or ensure numerical stability, further expanding the scope and effectiveness of these computational techniques \cite{gay1978solving,hart1992solution,zhang2020globally,martinez2000practical}.

It is noteworthy that Anderson's acceleration and quasi-Newton methods share a close relationship. 
Anderson's acceleration can be considered a type of quasi-Newton method, specifically referred to as the generalized Broyden's ``bad'' (type-II) update~\citep{fang2009two,walker2011anderson}. 
Consequently, the original form of Anderson's acceleration is also termed \texttt{AA-II}. 
Viewing from Broyden's ``good'' (Type-I) update angle, a variant of Anderson's acceleration has been introduced, designated as type-I Anderson's acceleration (\texttt{AA-I})~\citep{zhang2020globally,fang2009two}. 
\citet{gay1978solving} demonstrated that \texttt{AA-I} is capable of achieving a superlinear convergence rate when equipped with full memory and a restart strategy. \citet{burdakov2005stable} proposed a marginally more stable version of full memory \texttt{AA-I}, generalizing \citet{gay1978solving}'s restart strategy. Employing a non-monotone line search method allows this method to be proven to converge globally \citep{burdakov2018multipoint}. More recently, \citet{zhang2020globally} have proposed a stable \texttt{AA-I} algorithm with the capability of global convergence. The convergence properties of \texttt{AA-II} have been extensively explored \citep{kelley2018numerical,toth2015convergence,walker2011anderson}. The quasi-Newton method is renowned for its ability to achieve a superlinear convergence rate \citep{nocedal2006numerical,kelley2003solving,broyden1973local}. Lately, a number of studies have elucidated the explicit convergence rates of quasi-Newton methods \citep{rodomanov2021greedy,rodomanov2021rates,jin2020non,lin2021faster,ye2021explicit}.

Despite decades of study and application, Anderson's acceleration and quasi-Newton methods continue to present unresolved challenges in the field \citep{fang2009two,kelley2018numerical,kelley2003solving}. 
One of the unsolvable issues is that \texttt{AA-I} suffer an inferior convergence rate due to the obligatory restart strategy \citep{gay1978solving}.
The restart causes that \texttt{AA-I} can not fully exploit the information lying in the previous $n$ search directions.
Thus, a novel \texttt{AA-I} without restart, which potentially can achieve an $n$-step quadratic convergence rate, is preferred in real applications.
Furthermore, recent studies have elucidated the explicit convergence rates of various quasi-Newton method variants \citep{jin2020non,rodomanov2021greedy,ye2021greedy}, and several works try to design novel quasi-Newton methods with faster convergence rate \citep{lin2022explicit,jin2022sharpened}.
The similar explicit convergence rates of the proposed novel \texttt{AA-I} should be determined. 
These open questions underscore the ongoing complexity and dynamism of research in this domain, highlighting the potential for significant advancements and discoveries.

This paper provides affirmative solutions to the previously mentioned problems. 
We introduce a novel type-I Anderson's acceleration method, termed Adjusted Anderson's Acceleration (\algname), that achieves numerical stability for \texttt{AA-I} without necessitating a restart strategy. 
Additionally, our algorithm \algname~is capable of attaining an $n$-step super quadratic convergence rate (refer to Definition~\ref{def:qq}). 
We further delineate an explicit super quadratic convergence rate of our \algname~for both greedy and random versions.
Such a convergence rate represents a novel advancement for both Anderson's acceleration and quasi-Newton methods. 
Finally, our experiments validate that our \algname~can achieve a super quadratic convergence rate.

\section{Preliminaries}\label{sec:preliminaries}
\subsection{Notation}
We denote vectors by lowercase bold letters (e.g., $ \vs, \vx$), and matrices by capital bold letters (e.g., $ \mW = [w_{i j}] $).
We use $[n]:=\{1,\dots,n\}$ and $\mI_n$ is the $\sR^{n \times n}$ identity matrix.
Moreover, $\norm{\cdot}$ denotes the $\ell_2$-norm (standard Euclidean norm) for vectors, or induced $2$-norm (spectral norm) for a given matrix: $\norm{\mA} = \sup_{\norm{\vu}=1, \vu\in\sR^n}\norm{\mA\vu}$, and $\norm{\cdot}_F$ denotes the Frobenius norm of a given matrix: $\norm{\mA}_F^2 = \sum_{i=1}^{m} \sum_{j=1}^n a_{i j}^2$, where $\mA=[a_{i j}] \in \sR^{m\times n}$.
We also adopt $\lambda_i(\mA)$ be its $ i $-th largest eigenvalue if $\mA$ is positive semi-definite. 
For two real symmetric matrices $\mA, \mB \in \sR^{n \times n}$, we denote $\mA\succeq\mB$ (or $\mB \preceq \mA$) if $\mA-\mB$ is a positive semidefinite matrix.
We use the standard $\cO(\cdot)$ notation to hide universal constant factors.
Finally, we recall the definition of the rate of $n$-step quadratic convergence used in this paper.
\begin{defn}[$n$-step super quadratic convergence]\label{def:qq}
	Suppose a sequence $\{\bm{x}_k\}$ converges to $\bm{x}_{\infty}$ that satisfies
	\[ \lim_{k\to \infty }\frac{\norm{\bm{x}_{k+n+1}-\bm{x}_{\infty}}}{\norm{\bm{x}_k-\bm{x}_{\infty}}^2} = q_k, \mbox{ with } q_k\to 0 \mbox{ as } k\to \infty.\]
	We say the sequence $\{\bm{x}_k\}$ converges with a $n$-step super quadratic convergence rate.
\end{defn}

\subsection{Two Assumptions}

To analyze the convergence properties of our novel algorithms,  we first list two standard assumptions in the convergence analysis of quasi-Newton methods.

\begin{assume}\label{ass:lisp}
	The Jacobian $\mJ(\vx)$ is Lipschitz continuous related to $\vx^*$ with
	parameter $M$ under induced $2$-norm, i.e., 
	\begin{equation*}
		\norm{\mJ(\vx)-\mJ(\vx^*)}_F \leq M \norm{ \vx-\vx^*}.
	\end{equation*}
\end{assume}

Such assumption also appears in \cite{jin2020non}, and the condition in
Assumption \ref{ass:lisp} only requires Lipschitz continuity in the direction of the optimal solution $\vx^*$. 
Indeed, that condition is weaker than general Lipschitz continuity for any two points. 
It is also weaker than the strong self-concordance recently proposed by \citet{rodomanov2021greedy, rodomanov2021new,rodomanov2021rates}.

In addition, the Jacobian at the optimum is nonsingular, shown in Assumption \ref{ass:optimal} below.

\begin{assume}\label{ass:optimal}
	The solution $\vx^*$ of Eq.~\eqref{eq:prob} is nondegenerate, i.e., $\mJ_{*}:=\mJ(\vx^*)$ is nonsingular or $\mJ_{*}^{-1}$ exists.
\end{assume}

Assumption \ref{ass:optimal} also appears in \cite{dennis1996numerical, nocedal2006numerical}.
Additionally, along with the classical result on quadratic convergence of Newton’s methods, we need some sort of regularity conditions for the objective Jacobian just as listed in  Assumption \ref{ass:lisp}.

\subsection{Anderson's Acceleration and Broyden's Method}%\label{sec:broyden}

\paragraph{Anderson's Acceleration.} In the original Anderson's acceleration (\texttt{AA-II}) \citep{anderson1965iterative}, for each $k\ge 0$, one solves the the following least square problem with a normalization constraint:
\begin{equation} \label{eq:AA}
	\begin{aligned}
		\min_{\va} &\norm{\sum_{j=0}^{m_k} a_j \mF(\vx_{k-m_k+j})}\\
		\mbox{s.t.} &\sum_{j=0}^{m_k} a_j = 1,
	\end{aligned}
\end{equation}
where the integer $m_k$ is the memory in iteration $k$. 
Letting $\va_k = (a_{k, 0}, \dots, a_{k,m_k})^\top$ be the minimizer of Eq.~\eqref{eq:AA}, then the Anderson's acceleration updates $\vx_k$ as
\begin{equation*}
	\vx_{k+1} = \vx_k - \sum_{j=0}^{m_k} a_{k,j} \mF(\vx_{k-m_k+j}).
\end{equation*}
By denoting $\mS_k = [\vs_{k-m_k},\dots, \vs_{k-1}]$ and $\mY_k = [\vy_{k-m_k}, \dots, \vy_{k-1}]$ with $\vs_i = \vx_{i+1} - \vx_i$ and $\vy_i = \mF(\vx_{i+1}) - \mF(\vx_i)$, 
assuming that $\mY_k$ is of full column rank, then the update of \texttt{AA-II} can be represented as \citep{fang2009two}
\begin{align*}
	\vx_{k+1} 
	= \vx_k - \left(\mI + (\mS_k - \mY_k) \left(\mY_k^\top \mY_k\right)^{-1} \mY_k^\top\right)\mF(\vx_k)
	= \vx_k - \mH_k \mF(\vx_k).
\end{align*}
The above update can be regarded as a quasi-Newton-type update with $\mH_k$ being some sort of generalized  Broyden's ``bad'' (type-II) update \citep{broyden1965class,walker2011anderson}.
Thus, the original Anderson's acceleration is also called AA-II \citep{zhang2020globally}.

Type-I AA (AA-I) is a variant of original Anderson's acceleration derived from the perspective of Broyden's ``good'' (type-I) method \citep{fang2009two}. 
The update of \texttt{AA-I} is presented as follows with assuming that $\mS_k$ is of full column rank,
\begin{equation*}
	\vx_{k+1} = \vx_k - \left( \mI + (\mY_k - \mS_k)\left(\mS_k^\top\mS_k\right)^{-1} \mS_k^\top \right)^{-1}\mF(\vx_k)
	=\vx_k - \mB_k^{-1} \mF(\vx_k),
\end{equation*}
where 
\begin{equation*}
	\mB_k: = \mI + (\mY_k - \mS_k)\left(\mS_k^\top\mS_k\right)^{-1} \mS_k^\top.
\end{equation*}

\paragraph{Broyden's Method.}
Next, we will introduce the Broden's methods \citep{broyden1965class}. 
The so-called ``good'' Broyden's method is listed as follows:
\begin{equation*}
	\text{Broyden's ``good''}:
	\begin{cases}
	\vx_{k+1} =& \vx_k - \mB_k^{-1}\mF(\vx_k), \\
	\vy_k =& \mF(\vx_{k+1}) - \mF(\vx_k), \qquad \vs_k = \vx_{k+1} - \vx_k,\\
	\mB_{k+1} =& \mB_k + \frac{(\vy_k - \mB_k \vs_k) \vs_k^\top}{\vs_k^\top\vs_k}.
	\end{cases}
\end{equation*}
The Broyden's ``bad'' method is listed as follows:
\begin{equation*}
	\text{Broyden's ``bad''}:
	\begin{cases}
		\vx_{k+1} =& \vx_k - \mH_k\mF(\vx_k), \\
		\vy_k =& \mF(\vx_{k+1}) - \mF(\vx_k), \qquad \vs_k = \vx_{k+1} - \vx_k,\\
		\mH_{k+1} =& \mH_k + \frac{(\vs_k - \mH_k \vy_k) \vy_k^\top}{\vy_k^\top\vy_k}.
	\end{cases}
\end{equation*}

For a better understanding of the update rules of Broyden's method, we turn to a simple linear objective $\mF(\vx) =\mJ\vx-\vb$ for explanation. 
Then, both Broyden's ``good'' and ``bad'' methods have $\vy_k=\mJ\vs_k$.
Thus,
$$\mB_{k+1} = \mB_k + \frac{(\vy_k - \mB_k \vs_k) \vs_k^\top}{\vs_k^\top\vs_k}=\mB_k + \frac{(\mJ - \mB_k) \vs_k \vs_k^\top}{\vs_k^\top\vs_k},$$
and
$$\mH_{k+1} = \mH_k + \frac{(\vs_k - \mH_k \vy_k) \vy_k^\top}{\vy_k^\top\vy_k}=\mH_k + \frac{(\mJ^{-1} - \mH_k )\vy_k \vy_k^\top}{\vy_k^\top\vy_k}.
$$
We further denote that \textit{Broyden's Update} as:
\begin{defn}
	Letting $\mB, \mA\in\sR^{n \times n}$ and $\vs\in\sR^n$,  we define
	\begin{equation}\label{eq:broyden}
		 \text{Broyd} \left(\mB, \mA, \vs\right) := \mB +  \frac{\left(\mA-\mB\right)\vs\vs^\top}{\vs^\top\vs}.
	\end{equation}
\end{defn}
\noindent Then, for the simple linear objective, Broyden's ``good'' update can be written as $\mB_{k+1}=\text{Broyd} \left(\mB_k, \mJ, \vs_k\right)$, and the ``bad'' one is written as $\mH_{k+1}=\text{Broyd} \left(\mH_k, \mJ^{-1}, \vy_k\right)$. 
For the general non-linear equation, the update of $\mB_k$ in the above equation is a Broyden's update (Eq.~\eqref{eq:broyden}) with $\mA$ satisfying $\mA\vs_k = \vy_k$.

\section{Anderson Acceleration From Perspective of Greedy Quasi-Newton}

In this section, we will revisit the Anderson acceleration (type-I) from the perspective of the greedy quasi-Newton. 
Then we will show how the \texttt{AA-I} process approximates the Jacobian matrix gradually for the linear equation case where the Jacobian matrix is a constant matrix.

\subsection{Type-I Anderson Acceleration in Quasi-Newton Form}

We assume that the problem~\eqref{eq:prob} has a constant Jacobian $\mJ$.
Then the type-I Anderson acceleration has the following update rule in the quasi-Newton form \citep{fang2009two,zhang2020globally}
\begin{align}
	\vx_{k+1} =& \vx_k - \mB_k^{-1} \mF(\vx_k) \label{eq:x_up}\\
	\vs_k =& \vx_{k+1} - \vx_k\\
	\mB_{k+1} =& \mB_k + \frac{(\mJ - \mB_k)\vs_k (\mP^{\perp}_k \vs_k)^\top}{(\mP^{\perp}_k \vs_k)^\top \vs_k}, \label{eq:an_up} 
\end{align}
where $\mP^\perp_k = \mI - \mP_k$ with $\mP_k$ being the projection matrix of the range of $\vs_0, \dots, \vs_{k-1}$ and $\vs_0,\dots,\vs_k$ are linearly independent.
The update rule in Eq.~\eqref{eq:an_up} is very similar to the Broyden's type-I update in Eq.~\eqref{eq:broyden} but with the $\vs_k^\top$ replaced by $\left(\mP^\perp_k \vs_k\right)^\top $. 
Note that \texttt{AA-I} is first proposed in the work of \citet{gay1978solving}.

\subsection{Convergence Rate of AA-I for Linear Equation}

In this section, we will only consider the linear equation case with $\mF(\vx) = \mJ\vx - \vb$ whose Jacobian matrix is a constant matrix $\mJ$.
For the linear equation case, we will show that the matrix $\mB_k$ of \texttt{AA-I} will converge to the exact Jacobian $\mJ$ at most $n$ steps if all $\vs_k$'s are all independent.
First, we have the following proposition.
\begin{prop}\label{prop:aai}
Letting $\mJ\in\sR^{n\times n}$ be a Jacobian matrix and AA-I update as Eq.~\eqref{eq:x_up}-\eqref{eq:an_up} with $\vs_0,\dots,\vs_k$ being independent, then it holds that
\begin{align}
	&(\mB_{k+1} - \mJ) \vs_k = \bm{0}, \label{eq:null_space} \\
	&\mbox{rank}(\mB_k - \mJ) = \mbox{rank}(\mB_0 - \mJ) - k., \label{eq:rank_dec} \\
		&(\mJ - \mB_k) \mP_k^\perp = \mJ - \mB_k. \label{eq:perp}
\end{align}
\end{prop}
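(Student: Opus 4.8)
The plan is to prove the three identities together by induction on $k$, since they are intertwined: the rank decrease relies on the null-space statement, and the projection identity propagates the structure needed at the next step. First I would establish \eqref{eq:null_space} directly from the update rule \eqref{eq:an_up}: compute $(\mB_{k+1}-\mJ)\vs_k = (\mB_k - \mJ)\vs_k + \frac{(\mJ-\mB_k)\vs_k (\mP_k^\perp\vs_k)^\top \vs_k}{(\mP_k^\perp\vs_k)^\top\vs_k}$, and observe the scalar factor $(\mP_k^\perp\vs_k)^\top\vs_k$ cancels, leaving $(\mB_k-\mJ)\vs_k - (\mB_k-\mJ)\vs_k = \bm{0}$. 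The only thing to check here is that the denominator $(\mP_k^\perp\vs_k)^\top\vs_k = \norm{\mP_k^\perp\vs_k}^2$ is nonzero, which follows from linear independence of $\vs_0,\dots,\vs_k$: if it vanished then $\vs_k$ would lie in the span of $\vs_0,\dots,\vs_{k-1}$.

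Next I would prove \eqref{eq:perp} by induction. The base case: $\mP_0$ is the projection onto the (empty) span, so $\mP_0^\perp = \mI$ and the claim is trivial. For the inductive step, assume $(\mJ-\mB_k)\mP_k^\perp = \mJ - \mB_k$, equivalently $(\mJ-\mB_k)\mP_k = \bm{0}$, i.e. the rows of $\mJ-\mB_k$ (or rather its range's orthogonal complement structure) annihilate $\mathrm{span}(\vs_0,\dots,\vs_{k-1})$. I want to show $(\mJ-\mB_{k+1})\mP_{k+1} = \bm{0}$, where $\mP_{k+1}$ projects onto $\mathrm{span}(\vs_0,\dots,\vs_k)$. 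Take any $\vu$ in that span, write $\vu = \vu' + c\,\vs_k$ with $\vu'\in\mathrm{span}(\vs_0,\dots,\vs_{k-1})$; then $(\mJ-\mB_{k+1})c\vs_k = \bm{0}$ by \eqref{eq:null_space}, and for $(\mJ-\mB_{k+1})\vu'$ I expand $\mB_{k+1}$ via \eqref{eq:an_up}: the correction term is $\frac{(\mJ-\mB_k)\vs_k(\mP_k^\perp\vs_k)^\top\vu'}{(\mP_k^\perp\vs_k)^\top\vs_k}$, and since $\vu'\in\mathrm{range}(\mP_k)$ we have $(\mP_k^\perp\vs_k)^\top\vu' = \vs_k^\top\mP_k^\perp\vu' = 0$, so $(\mJ-\mB_{k+1})\vu' = (\mJ-\mB_k)\vu' = \bm{0}$ by the inductive hypothesis. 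Hence $(\mJ-\mB_{k+1})\mP_{k+1}=\bm{0}$.

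For the rank identity \eqref{eq:rank_dec} I would again induct, showing $\mathrm{rank}(\mB_{k+1}-\mJ) = \mathrm{rank}(\mB_k-\mJ)-1$. Write $\mB_{k+1}-\mJ = (\mB_k-\mJ)\left(\mI - \frac{\vs_k(\mP_k^\perp\vs_k)^\top}{(\mP_k^\perp\vs_k)^\top\vs_k}\right)$, factoring the rank-one correction out on the right; call the bracketed matrix $\mE_k$, an oblique projector of rank $n-1$ with kernel $\mathrm{span}(\vs_k)$. Then $\mathrm{rank}(\mB_{k+1}-\mJ) = \mathrm{rank}((\mB_k-\mJ)\mE_k)$. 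Since $\mathrm{rank}((\mB_k-\mJ)\mE_k) \ge \mathrm{rank}(\mB_k-\mJ) - \mathrm{rank}(\mI-\mE_k) = \mathrm{rank}(\mB_k-\mJ)-1$, I need only the matching upper bound: the rank drops by at least one, i.e. $\vs_k$ is not in the kernel of $\mB_k - \mJ$ restricted appropriately — concretely, $(\mB_k-\mJ)\vs_k \ne \bm{0}$. This is where the real content lies: I would argue that $\vs_k$ lies in $\mathrm{range}(\mP_k^\perp)$ (up to the $\mP_k$ component), and by \eqref{eq:perp} applied at index $k$ together with the inductive hypothesis that $\mathrm{rank}(\mB_k-\mJ) = \mathrm{rank}(\mB_0-\mJ)-k$ and the fact that the previously "annihilated" directions $\vs_0,\dots,\vs_{k-1}$ exhaust the kernel growth, $\vs_k$ cannot already be in $\ker(\mB_k-\mJ)$ unless it lies in $\mathrm{span}(\vs_0,\dots,\vs_{k-1})$, contradicting independence. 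Making this last point airtight — tracking that the kernel of $\mB_k-\mJ$ is exactly $\mathrm{span}(\vs_0,\dots,\vs_{k-1})$ plus possibly the original kernel of $\mB_0-\mJ$, and that $\vs_k$ avoids it — is the main obstacle; everything else is routine algebra with the update formula.
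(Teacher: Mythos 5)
Your handling of Eq.~\eqref{eq:null_space} and Eq.~\eqref{eq:perp} is sound and matches the paper. For \eqref{eq:null_space} the computation is exactly the paper's, and you are right to flag that the denominator $(\mP_k^\perp\vs_k)^\top\vs_k = \norm{\mP_k^\perp\vs_k}^2$ is positive by linear independence. For \eqref{eq:perp} you give a careful inductive argument; the paper compresses it to the one-liner $(\mJ-\mB_k)(\mI-\mP_k)=\mJ-\mB_k$, which silently invokes $(\mB_k-\mJ)\vs_i=\bm{0}$ for $i<k$. Your version makes that dependency explicit, which is an improvement.

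For \eqref{eq:rank_dec}, your instinct that ``the main obstacle'' is showing the rank drops by \emph{exactly} one is well-founded, and in fact the equality as written cannot be proved from the stated hypotheses alone. Your factorization $\mB_{k+1}-\mJ=(\mB_k-\mJ)\mE_k$ with $\mE_k=\mI-\frac{\vs_k(\mP_k^\perp\vs_k)^\top}{(\mP_k^\perp\vs_k)^\top\vs_k}$ (an oblique projector of rank $n-1$) gives the lower bound $\mathrm{rank}(\mB_{k+1}-\mJ)\ge\mathrm{rank}(\mB_k-\mJ)-1$, but the matching upper bound requires $\ker(\mB_k-\mJ)\subseteq(\mP_k^\perp\vs_k)^\perp$, which can fail once $\mB_0-\mJ$ is singular. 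Concretely, with $n=2$, $\mB_0-\mJ=\begin{pmatrix}1&0\\0&0\end{pmatrix}$ and $\vs_0=(1,1)^\top$ one has $\mP_0^\perp=\mI$ and
\begin{equation*}
\mB_1-\mJ=(\mB_0-\mJ)\Bigl(\mI-\tfrac{\vs_0\vs_0^\top}{\vs_0^\top\vs_0}\Bigr)=\begin{pmatrix}1/2&-1/2\\0&0\end{pmatrix},
\end{equation*}
which still has rank $1$, not $\mathrm{rank}(\mB_0-\mJ)-1=0$. So you should not try to close the equality in full generality; it holds only when $\mB_0-\mJ$ is nonsingular (then $\ge n-k$ from your factorization meets $\le n-k$ from the kernel argument).

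What the paper's own proof actually establishes --- and what is all that Theorem~\ref{thm:main_1} uses --- is the inequality $\mathrm{rank}(\mB_k-\mJ)\le n-k$. That follows cleanly from the route you already have: show by induction (using $\mP_k^\perp\vs_k\perp\vs_i$ to kill the rank-one correction) that $(\mB_{k+1}-\mJ)\vs_i=\bm{0}$ for all $i\le k$, so the $k{+}1$ independent vectors $\vs_0,\dots,\vs_k$ lie in $\ker(\mB_{k+1}-\mJ)$ and the rank is at most $n-(k{+}1)$. Replace the struggle to prove exact equality with this kernel-dimension argument and state \eqref{eq:rank_dec} as an inequality (or add the hypothesis that $\mB_0-\mJ$ is nonsingular), and your proof is complete.
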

\begin{proof}
Note that, by the update rule of \texttt{AA-I}, it holds that 
\begin{align*}
	(\mB_{k+1} - \mJ) \vs_k \stackrel{\eqref{eq:an_up}}{=} (\mB_k - \mJ) \vs_k - (\mB_k - \mJ) \vs_k = \bm{0}. 
\end{align*}

Furthermore,  it holds that 
\begin{align}
	\mP_k^\perp \vs_k \perp \vs_0,\dots, \vs_{k-1}. \label{eq:s_perp}
\end{align}
This implies that for $i = 0,\dots, k-1$
\begin{align*}
	(\mB_{k+1} - \mJ) \vs_i = (\mB_k - \mJ) \vs_i + \frac{(\mJ - \mB_k)\vs_k \vs_k^\top \mP_k^\perp \vs_i}{(\mP^{\perp}_k \vs_k)^\top \vs_k} = (\mB_k - \mJ)\vs_i = \bm{0}.
\end{align*}
Thus, we can obtain that
\begin{equation*}
    \mbox{rank}(\mB_k - \mJ) = \mbox{rank}(\mB_0 - \mJ) - k.
\end{equation*}
Furthermore, we can also obtain that
\begin{align*}
	(\mJ - \mB_k) \mP_k^\perp = (\mJ - \mB_k) (\mI - \mP_k) = \mJ - \mB_k.
\end{align*}
\end{proof}

Based on the above proposition, we can obtain the following convergence property which describes how $\mB_k$ approaches the exact Jacobian $\mJ$.

\begin{thm}\label{thm:main_1}
Let $\{\vx_k\}$, $\{\vs_k\}$ and $\{\mB_k\}$ be generated by the \texttt{AA-I} (Eq.~\eqref{eq:x_up}-\eqref{eq:an_up}). 
If $\vs_k$ is independent of $\vs_0,\dots, \vs_{k-1}$, then it holds that
\begin{align}
	\norm{\mJ - \mB_{k+1}}_F^2 
	\le
	\left(1 - \frac{\lambda_{n - k} \left( (\mJ - \mB_k)^\top (\mJ - \mB_k) \right)}{\sum_{i = 1}^{n -k} \lambda_i \left( (\mJ - \mB_k)^\top (\mJ - \mB_k) \right)}\right) \cdot \norm{\mJ - \mB_k}_F^2.  \label{eq:B_dec}
\end{align} 
Letting $\vs_{k^*}$ be the first vector which dependents on $\vs_0,\dots,\vs_{k^*-1}$, then it holds that $\mF(\vx_{k^*+1}) = \bm{0}$. 
\end{thm}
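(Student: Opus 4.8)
The plan is to prove the two claims of Theorem~\ref{thm:main_1} separately, using Proposition~\ref{prop:aai} as the main workhorse.

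For the inequality \eqref{eq:B_dec}, I would start from the update rule \eqref{eq:an_up} and compute $\norm{\mJ-\mB_{k+1}}_F^2$ directly. Writing $\mE_k := \mJ - \mB_k$ and $\vu_k := \mP_k^\perp \vs_k$, the update becomes $\mE_{k+1} = \mE_k - \frac{\mE_k \vs_k \vu_k^\top}{\vu_k^\top \vs_k}$. Since $\vu_k = \mP_k^\perp \vs_k$ and $\mP_k^\perp$ is an orthogonal projection, $\vu_k^\top \vs_k = \vu_k^\top \vu_k = \norm{\vu_k}^2$, so the rank-one correction is $\frac{\mE_k \vs_k \vu_k^\top}{\norm{\vu_k}^2}$. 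Expanding the squared Frobenius norm and using $\langle \mA, \mB\rangle_F = \Tr(\mA^\top \mB)$ gives $\norm{\mE_{k+1}}_F^2 = \norm{\mE_k}_F^2 - 2\frac{\norm{\mE_k \vs_k}^2 \cdot (\text{cross term})}{\norm{\vu_k}^2} + \dots$; here I should be careful because $\vs_k \ne \vu_k$ in general. The cleaner route is to use \eqref{eq:perp}: $\mE_k \mP_k^\perp = \mE_k$, so $\mE_k \vs_k = \mE_k \mP_k^\perp \vs_k = \mE_k \vu_k$. With this substitution the update is exactly a Broyden-type step $\mE_{k+1} = \mE_k - \frac{\mE_k \vu_k \vu_k^\top}{\norm{\vu_k}^2} = \mE_k (\mI - \frac{\vu_k \vu_k^\top}{\norm{\vu_k}^2})$, and the Frobenius-norm computation collapses to $\norm{\mE_{k+1}}_F^2 = \norm{\mE_k}_F^2 - \frac{\norm{\mE_k \vu_k}^2}{\norm{\vu_k}^2}$. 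To obtain the stated bound I then lower-bound the gain ratio: by \eqref{eq:perp}, $\mE_k$ vanishes on $\mathrm{range}(\mP_k)$, so it effectively acts on the $(n-k)$-dimensional subspace $\mathrm{range}(\mP_k^\perp)$, which contains $\vu_k$. Restricting attention to that subspace, $\max_{\vu}\frac{\norm{\mE_k\vu}^2}{\norm{\vu}^2} \ge \frac{1}{n-k}\sum_{i=1}^{n-k}\lambda_i(\mE_k^\top \mE_k)$ is not quite what we want — we actually need the worst case, not the best. The subtlety is that $\vu_k$ is not chosen to maximize anything here (that is the greedy variant); for the plain AA-I linear analysis the relevant estimate is that $\norm{\mE_k \vu_k}^2/\norm{\vu_k}^2 \ge \lambda_{\min}$ over the effective subspace, i.e. $\ge \lambda_{n-k}(\mE_k^\top \mE_k)$, while $\norm{\mE_k}_F^2 = \sum_{i=1}^{n-k}\lambda_i(\mE_k^\top\mE_k)$. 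Dividing gives exactly \eqref{eq:B_dec}. The step that needs genuine care is justifying $\norm{\mE_k\vu_k}^2 \ge \lambda_{n-k}(\mE_k^\top\mE_k)\norm{\vu_k}^2$, i.e. that $\vu_k$ is not in the kernel of $\mE_k$ restricted to the effective $(n-k)$-dimensional subspace — this uses that $\vs_0,\dots,\vs_k$ are independent so $\vu_k\ne \bm 0$, combined with \eqref{eq:null_space} telling us the kernel of $\mE_{k}$ already contains $\mathrm{span}(\vs_0,\dots,\vs_{k-1})$ and no more (by the rank count \eqref{eq:rank_dec}). I expect this eigenvalue/subspace bookkeeping to be the main obstacle.

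For the second claim, suppose $\vs_{k^*}$ is the first direction lying in $\mathrm{span}(\vs_0,\dots,\vs_{k^*-1})$. By \eqref{eq:rank_dec} applied up to index $k^*$, we have $\mathrm{rank}(\mB_{k^*}-\mJ) = \mathrm{rank}(\mB_0-\mJ) - k^*$, and more usefully \eqref{eq:null_space} gives $(\mB_{k^*}-\mJ)\vs_i = \bm 0$ for all $i=0,\dots,k^*-1$. Since $\vs_{k^*}\in\mathrm{span}(\vs_0,\dots,\vs_{k^*-1})$, linearity immediately yields $(\mB_{k^*}-\mJ)\vs_{k^*} = \bm 0$, i.e. $\mB_{k^*}\vs_{k^*} = \mJ \vs_{k^*}$. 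Now I use the update $\vx_{k^*+1} = \vx_{k^*} - \mB_{k^*}^{-1}\mF(\vx_{k^*})$, so $\vs_{k^*} = -\mB_{k^*}^{-1}\mF(\vx_{k^*})$, equivalently $\mB_{k^*}\vs_{k^*} = -\mF(\vx_{k^*})$. Combining, $\mF(\vx_{k^*+1}) = \mF(\vx_{k^*}) + \mJ(\vx_{k^*+1}-\vx_{k^*}) = \mF(\vx_{k^*}) + \mJ\vs_{k^*} = \mF(\vx_{k^*}) + \mB_{k^*}\vs_{k^*} = \mF(\vx_{k^*}) - \mF(\vx_{k^*}) = \bm 0$, where the first equality is exactness of the linear model $\mF(\vx)=\mJ\vx-\vb$. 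This shows the iteration terminates at the exact solution in at most $n$ steps, since the $\vs_k$ can stay independent for at most $n$ iterations.

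One bookkeeping point to handle at the outset: the chain of rank/null-space identities in Proposition~\ref{prop:aai} was stated under the hypothesis that $\vs_0,\dots,\vs_k$ are independent, so I should be careful to invoke it only at indices where that holds, and treat the terminal index $k^*$ by passing to the limit from the independent regime (the identity $(\mB_{k^*}-\mJ)\vs_i=\bm 0$ for $i<k^*$ still holds since it only needs independence of $\vs_0,\dots,\vs_{k^*-1}$). If $\mB_{k^*}$ fails to be invertible the update is ill-defined, but Assumption~\ref{ass:optimal} together with the rank-decrease property keeps $\mB_k$ nonsingular throughout the independent regime near the solution, so this is not an issue; I would note it briefly rather than dwell on it.
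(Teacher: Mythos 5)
Your argument matches the paper's proof essentially step for step (the same rank-one Frobenius-norm expansion yielding $\norm{\mJ-\mB_{k+1}}_F^2 = \norm{\mJ-\mB_k}_F^2 - \norm{(\mJ-\mB_k)\mP_k^\perp\vs_k}^2/\norm{\mP_k^\perp\vs_k}^2$, the same $\lambda_{n-k}$ lower bound on the effective subspace, and the same linear-algebra derivation of $\mF(\vx_{k^*+1})=\bm{0}$), so it is correct. The only small imprecision is your assertion that \eqref{eq:rank_dec} forces the kernel of $\mJ-\mB_k$ to be \emph{exactly} $\mathrm{span}(\vs_0,\dots,\vs_{k-1})$: that holds only if $\mB_0-\mJ$ has full rank, but when it does not, $\lambda_{n-k}\bigl((\mJ-\mB_k)^\top(\mJ-\mB_k)\bigr)=0$ and \eqref{eq:B_dec} is vacuously true, so the conclusion still goes through (the paper's own proof leaves this case equally implicit).
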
 
\begin{proof}
	First, we consider the case that $\vs_k$ is independent of $\vs_0,\dots, \vs_{k-1}$. 
	By the update rule of \texttt{AA-I}, we have
\begin{align*}
	&\norm{\mJ - \mB_{k+1}}_F^2 \\
	\stackrel{\eqref{eq:an_up}}{=}&
	\norm{\mB_k - \mJ + \frac{(\mJ - \mB_k)\vs_k (\mP^{\perp}_k \vs_k)^\top}{(\mP^{\perp}_k \vs_k)^\top \vs_k} }_F^2
	\\
	=&\norm{\mB_k - \mJ}_F^2 + \norm{\frac{(\mJ - \mB_k)\vs_k (\mP^{\perp}_k \vs_k)^\top}{(\mP^{\perp}_k \vs_k)^\top \vs_k}}_F^2 + \frac{2\tr\left((\mB_k - \mJ)^\top(\mJ - \mB_k)\vs_k (\mP^{\perp}_k \vs_k)^\top \right)}{\vs_k^\top \mP_k^\perp \vs_k}
	\\
	=&\norm{\mB_k - \mJ}_F^2
	+
	\frac{\tr\left(\vs_k^\top (\mJ - \mB_k)^\top(\mJ - \mB_k) \vs_k\right)}{\vs_k^\top \mP_k^\perp \vs_k}
	-
	 \frac{2\tr\left( \vs_k^\top \mP_k^\perp (\mJ - \mB_k)^\top(\mJ - \mB_k) \vs_k  \right)}{\vs_k^\top \mP_k^\perp \vs_k}
	\\
	\stackrel{\eqref{eq:perp}}{=}&
	\norm{\mB_k - \mJ}_F^2
	-
	\frac{\tr\left((\mP_k^\perp \vs_k)^\top (\mJ - \mB_k)^\top(\mJ - \mB_k) \mP_k^\perp \vs_k\right)}{\vs_k^\top \mP_k^\perp \vs_k}
	\\
	=&
	\norm{\mB_k - \mJ}_F^2
	-
	\frac{\norm{(\mJ - \mB_k) \mP_k^\perp \vs_k}^2}{\norm{\mP_k^\perp \vs_k}^2}.
\end{align*} 
Now we try to obtain the lower bound of $ \frac{\norm{(\mJ - \mB_k) \mP_k^\perp \vs_k}^2}{\norm{\mP_k^\perp \vs_k}^2} $.
Since  $\vs_0,\dots,\vs_k$ are independent, by Eq.~\eqref{eq:rank_dec}, we can obtain that
\begin{align*}
	\mathrm{rank}(\mJ - \mB_k) \leq n-k.
\end{align*}

Furthermore, by Eq.~\eqref{eq:null_space} and \eqref{eq:s_perp}, we can obtain that 
\begin{align*}
	\frac{\norm{(\mJ - \mB_k) \mP_k^\perp \vs_k}^2}{\norm{ \mP_k^\perp \vs_k}^2}
	\ge
	\lambda_{n - k} \left( (\mJ - \mB_k)^\top (\mJ - \mB_k) \right).
\end{align*}
It also holds that
\begin{align*}
	\norm{\mJ - \mB_k}_F^2 = \sum_{i = 1}^{n -k} \lambda_i \left( (\mJ - \mB_k)^\top (\mJ - \mB_k) \right).
\end{align*}
Therefore, we can obtain that
\begin{align*}
	\norm{\mJ - \mB_{k+1}}_F^2 
	\le
	\left(1 - \frac{\lambda_{n - k} \left( (\mJ - \mB_k)^\top (\mJ - \mB_k) \right)}{\sum_{i = 1}^{n -k} \lambda_i \left( (\mJ - \mB_k)^\top (\mJ - \mB_k) \right)}\right) \cdot \norm{\mJ - \mB_k}_F^2. 
\end{align*}

Next, we will consider the case that $\vs_{k^*}$ be the first vector which is dependent of $\vs_0,\dots, \vs_{k^*-1}$.
By Eq.~\eqref{eq:null_space}, we can obtain that for $i=0,\dots, k^*-1$, it holds that $(\mB_k - \mJ)\vs_i = \bm{0}$.
Since, $\vs_{k^*}$ is dependent on $\vs_0,\dots, \vs_{k^*-1}$. 
Thus, we have $(\mB_{k^*} - \mJ)\vs_{k^*} = \bm{0}$ which implies that $\mB_{k^*}\vs_{k^*} = \mJ\vs_{k^*}$. 
By the update rule of \texttt{AA-I} in Eq.~\eqref{eq:x_up}, we can obtain that $\mB_{k^*}\vs_{k^*} = -\mF(\vx_{k^*})$. 
At the same time, it holds that $\mJ\vs_{k^*} = \mJ(\vx_{k^*+1} - \vx_{k^*}) = \mF(\vx_{k^*+1}) -\mF(\vx_{k^*}) $.
Combining with   $\mB_{k^*}\vs_{k^*} = \mJ\vs_{k^*}$, we can obtain that $\mF(\vx_{k^*+1}) = \bm{0}$ which implies that $\vx_{k^*+1}$ is a solution of $\mF(\vx) = \bm{0}$.
\end{proof}

\begin{remark}
	Theorem~\ref{thm:main_1} is almost the same to Theorem~3.1 of \citet{gay1978solving} but we provide an explicit convergence rate of $\norm{\mB_k - \mJ}_F$ in the update of \texttt{AA-I}.
Eq.~\eqref{eq:B_dec} shows that $\mB_k$ will gradually converge to the exact Jacobian $\mJ$. This property is  the same to the one of greedy quasi-Newton \cite{rodomanov2021greedy,lin2021faster,ye2021greedy}.
Thus, we can regard Anderson acceleration as a kind of greedy quasi-Newton method.
Furthermore, $\mB_k$ will converge to $\mJ$ at most $n$ steps since when $k=n-1$, it holds that
\begin{align*}
	1 - \frac{\lambda_{n - k} \left( (\mJ - \mB_k)^\top (\mJ - \mB_k) \right)}{\sum_{i = 1}^{n -k} \lambda_i \left( (\mJ - \mB_k)^\top (\mJ - \mB_k) \right)} 
	= 
	1 - \frac{\lambda_1 \left((\mJ - \mB_{n-1})^\top (\mJ - \mB_{n-1})\right) }{\lambda_1 \left((\mJ - \mB_{n-1})^\top (\mJ - \mB_{n-1})\right)} =0.
\end{align*}
\end{remark}
\begin{remark}
The property that the matrix $\mB_k$ of \texttt{AA-I} converges to the exact Jacobian at most $n$ steps mainly bases properties shown in Eq.~\eqref{eq:null_space}-\eqref{eq:perp}. 
Similar properties also hold for the greedy SR1 algorithm (Refer to Theorem 3.5 of \cite{rodomanov2021greedy}), which leads that the greedy SR1 algorithm can find the optimum for the quadratic optimization problem. 
These properties are because of the update of \texttt{AA-I} exploits the special property of $\mP_k^\perp \vs_k$.  
Unfortunately, when $k = n$, $\mP_k^\perp \vs_k$ will be a zeroth vector and the update in Eq.~\eqref{eq:an_up} collapses.
This is the reason why solving the general non-linear systems has to restart the update of \texttt{AA-I} even it takes a full memory, that is, storing $\vs_{k}, \vs_{k-1},\dots, \vs_{k-n}$ \citep{gay1978solving}.
\end{remark}
\begin{remark}\label{rmk:quad}
	The famous conjugate gradient method can achieve a $n$-step quadratic convergence rate with a restart strategy \citep{cohen1972rate}, that is, restart for each $n$ step. 
	This property relies on the fact that the conjugate gradient method can find the optimal point of a quadratic function at most $n$ steps.
	However, the problem in practice is commonly of high dimension, i.e., $n$ is large.
	Thus, the conjugate gradient with the  $n$-step restart strategy are not relevant in a practical context \cite{nocedal2006numerical}.
    \texttt{AA-I} with a restart strategy can achieve a superlinear convergence rate \citep{gay1978solving}. 
    However, due to the restarting, \texttt{AA-I} can not full exploit the information in the previous $n$ directions which will lead to an inferior convergence rate compared with our method. 
	In this paper, we try to propose a novel adjusted Anderson acceleration method which can  achieve a $n$-step super quadratic convergence rate but without a restart.
\end{remark}

\section{ Anderson Acceleration Without Restart}
\label{sec:aaa}

In this section, we will propose our novel Anderson's acceleration method, which does \emph{not} require a restart strategy to achieve the numerical stability.
First, we give the algorithm description and list the main procedure of our algorithm.
Then, we will show that our algorithm can find the optimum at most $n$ steps for the linear equation problem, which is the same as the \texttt{AA-I}.
Finally, we give a convergence analysis of our algorithm for the general non-linear equation problems and provide explicit super quadratic convergence rates of our algorithm, including the greedy and random versions. 
\subsection{Algorithm Description}

In order to design a novel quasi-Newton whose approximate Jacobian $\mB_k$ satisfies the properties in Proposition~\ref{prop:aai} but without a restart strategy, we resort to the update rule of the SR1 algorithm.
Accordingly, we propose a new algorithm named Adjusted Anderson Acceleration (\algname), whose update combines the \texttt{AA-I} update with the one of SR1. 
Given matrices $\mB$, $\mJ$ and a vector $\vs$, our \algname~takes the following update rule:
\begin{equation}\label{eq:aaa_up}
	\aaa(\mB, \mJ, \vs) = 
	\begin{cases}
		\mB + \frac{(\mJ-\mB)\vs\vs^\top (\mJ-\mB)^\top}{\vs^\top (\mJ - \mB)^\top(\mJ - \mB)\vs} (\mJ - \mB), \qquad &\mbox{if } (\mJ - \mB)\vs \neq \bm{0}, \\
		\mB, & \mbox{otherwise}.
	\end{cases}
\end{equation}
For comparison, the SR1 update is defined as follows for given two positive definite matrices $\mH$, $\mG$ and a vector $\vu$, 
\begin{equation}\label{eq:sr1}
	\sr(\mG,\mH,\vu) = \begin{cases}
		\mG - \frac{(\mG - \mH)\vu\vu^\top(\mG - \mH)}{\vu^\top(\mG - \mH)\vu },  \qquad & \mbox{if } (\mG - \mH)\vu\neq \bm{0},\\
		\mG, & \mbox{otherwise}.
	\end{cases}
\end{equation}
%Letting us denote that $\mG_+ = \sr(\mG,\mH,\vu)$, Theorem~3.5 of \citet{rodomanov2021greedy} shows that $(\mG_+ - \mH)\vu = \bm{0}$ and $\ker(\mG - \mH) \in\ker(\mG_+ - \mH)$ which are almost the same to Eq.~\eqref{eq:null_space} and Eq.~\eqref{eq:rank_dec}.
%However, the SR1 update does not require to restart.

Comparing Eq.~\eqref{eq:aaa_up} with Eq.~\eqref{eq:sr1}, we can observe that they are similar to each other. 
\algname~only has an extra $\mB-\mJ$ term compared with the SR1 update if $\mB = \mG$ and $\mJ = \mH$.
Due to this similarity, our \algname~shares some advantages of  the SR1 method such as \algname~does not require a restart strategy.
However, these two updates have some differences. 
For example, in the update of Eq.~\eqref{eq:aaa_up}, the matrices $\mB$ and $\mJ$ are commonly asymmetric while the SR1 update commonly requires $\mG$ and $\mH$ to be positive definite.

Comparing Eq.~\eqref{eq:aaa_up} with the update of \texttt{AA-I} in Eq.~\eqref{eq:an_up}, we can observe that our \algname~replaces $\mP_k^\perp$ with $\mJ - \mB$. 
This is because it holds that $\mP_k^\perp = \mP_k^\perp \mP_k^\perp$ and $\mP_k^\perp = \left[\mP_k^\perp \right]^\top$. 
And accordingly,  Eq.~\eqref{eq:an_up} can be represented as:
\begin{equation*}
\mB_{k+1} = \mB_k + \frac{(\mJ - \mB_k)\vs_k \vs_k^\top \left[\mP_k^\perp\right]^\top }{ \vs_k^\top \left[\mP_k^\perp\right]^\top \mP^{\perp}_k\vs_k} \cdot \mP_k^\perp.
\end{equation*} 

Thus, our \algname~can be regarded as a kind of Anderson acceleration method but with some modifications coming from the SR1 update.

Next, we will propose two methods to choose $\vs$.
The first method is choose $\vs$ greedily such that 
\begin{equation}\label{eq:greedy}
	\vs(\mB,\mJ) = \argmax_{\vs\in\{\ve_1,\dots,\ve_n\}} \norm{(\mJ - \mB)\vs}.
\end{equation} 
Accordingly, we call this method as the \emph{greedy} \algname.
The second method uses a random vector $\vs \sim \cN(0, \mI_n)$. Accordingly, we call this method as the \emph{random} \algname.

With the update rule in Eq.~\eqref{eq:aaa_up} and two methods for choosing $\vs$, we can obtain the following algorithmic procedure of \algname~for solving problem~\eqref{eq:prob}:
\begin{equation} \label{eq:aaa}
	\begin{cases}
		& \vx_{k+1} = \vx_k-\mB_k^{-1} \mF(\vx_k),   \\
		& \mbox{Choose } \vs_k \mbox{ greedily or randomly}, \\ 
	& \mB_{k+1} = \aaa(\mB_k, \mJ(\vx_k), \vs_k) .
	\end{cases}
\end{equation}
The detailed algorithm is listed in Algorithm~\ref{algo:aaa-greedy-random}.
Note that Algorithm~\ref{algo:aaa-greedy-random} is mainly for the convergence analysis.
To obtain the computation efficiency, the inverse update of \algname~should be conducted as follows:
\begin{equation*}
	\begin{cases}
			& \vx_{k+1} = \vx_k-\mC_k \mF(\vx_k),    \\
			& \mC_{k+1} = \mC_k + \frac{\mC_k \left(\mR_k\vs_k\right) \cdot \left(\vs_k^\top \mR_k^\top\mR_k\mC_k\right)}{\vs_k^\top\mR_k^\top\mR_k\vs_k+\left(\vs_k^\top\mR_k^\top\mR_k\mC_k\right)\left(\mR_k\vs_k\right)}, \quad \mR_k := \mB_k-\mJ_k.  
	\end{cases}
\end{equation*} 

\begin{remark}
In the real implementation of \texttt{AA-I}, the $\mJ_k\vs_k$ is commonly replaced by $\mF(\vx_{k+1}) - \mF(\vx_k)$. 
However, in the update of \algname, it requires to compute $\mJ_{k}\vs_k$. 
Fortunately, the computation of $\mJ_k\vs_k$ can be obtained efficiently and its computation cost is almost the same to evaluating $\mF(\vx)$ (refer to Chapter 8.2 of \citet{nocedal2006numerical}).
In the implementation of the greedy \algname, one has to solve the problem~\eqref{eq:greedy} which seems requiring to compute the $\mJ-\mB$ explicitly. 
We can use the JL Lemma to reduce the computation \cite{lindenstrauss1984extensions}.
We use a Gaussian random matrix $\mU\in\RR^{m\times n}$ with $m = \cO(\log n)$ and compute $\mU(\mJ - \mB)$. 
Note that $\mU(\mJ - \mB)$ can also be computed efficiently at the cost about $\cO(\log n)$ times of computing $\mF(\vx)$ (refer to Chapter 8.2 of \citet{nocedal2006numerical}). 
Then, we will find a vector $\vs'$ such that maximizes $\norm{\mU(\mJ - \mB)\vs}$ which can be computed very efficiently.
The property of the JL Lemma shows that it holds with a high probability that $\frac{1}{2} \norm{ (\mJ - \mB)_{:,i} } \leq \norm{\mU((\mJ - \mB)_{:,i})} \leq \frac{3}{2} \norm{ (\mJ - \mB)_{:,i} }$ \cite{kane2014sparser}.
Thus, it holds that $\norm{(\mJ -\mB)\vs'} \geq \frac{1}{6}\max_{\vs\in\{\ve_1,\dots,\ve_n\}} \norm{(\mJ - \mB)\vs}$.
That is, the greedy direction $\vs'$ computed with the JL Lemma approximation is also a good direction. 
\end{remark}

\begin{algorithm}[t]
	\caption{Greedy or Random \algname~Method.}
	\begin{algorithmic}[1]
		\STATE Initialization: set $\mB_0, \vx_0$.
		\FOR{$ k \geq 0 $}
		\STATE Update $\bm{x}_{k+1} = \bm{x}_{k} - \mB_k^{-1} \mF(\vx_k)$.
		\STATE Choose $\vs_k$ from \\
		1) \textit{greedy update}: $ \vs_k = \mathop{\arg\max}_{\vs \in \{\ve_1, \dots, \ve_n\}} \norm{(\mB_k-\mJ_k)\vs}$ with $\mJ_k$ being the Jacobian at $\vx_k$, or \\
		2) \textit{random update}: $\vs_k \sim \cN(0, \mI_n)$.
		%or $\vs_k \sim \mathrm{Unif}(\mathcal{S}^{n-1})$, or $ \vs_k \sim \mathcal{N}(\bm{0}, \mI_n)$.	
		\STATE Compute $\mB_{k+1} = \aaa(\mB_k, \mJ_k, \vs_k)$ by Eq.~\eqref{eq:aaa_up}.
		\ENDFOR
	\end{algorithmic}
	\label{algo:aaa-greedy-random}
\end{algorithm}

\subsection{  $n$-Step Convergence for Linear Equation}
In this section, we consider the linear equation case $\mF(\vx) = \mJ\vx - \vb$ with $\mJ$ being invertible.
Let us denote $\mR_k = \mB_k - \mJ$. 
The the update in Eq.~\eqref{eq:aaa_up} can be represented as  
\begin{align} \label{eq:R_up}
	\mR_{k+1} = 
	\begin{cases}
		\mR_k - \frac{\mR_k \vs_k \vs_k^\top \mR_k^\top}{\vs_k^\top \mR_k^\top \mR_k \vs_k}\mR_k, \qquad &\mbox{if} \quad	( \mJ-\mB_k )\vs_k \neq 0,\\
		\mR_k, &\mbox{otherwise}.
	\end{cases}
\end{align}

\begin{prop}\label{prop:R}
Given $\vs_0\dots\vs_k$, and $\mB_k$ is generated by Algorithm~\ref{algo:aaa-greedy-random} with a constant Jacobian matrix $\mJ$, then it holds that 
\begin{align}
	&(\mB_{k+1} - \mJ)\vs_k = \bm{0}, \label{eq:null_space_1}\\
		&\ker(\mB_k - \mJ) \in \ker(\mB_{k+1} - \mJ). \label{eq:rank_dec_1}
\end{align}
\end{prop}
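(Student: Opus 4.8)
The plan is to work entirely with the residual matrix $\mR_k=\mB_k-\mJ$, for which the \algname~update collapses to the two-branch rank-reducing form in Eq.~\eqref{eq:R_up}; both claimed identities will then drop out of elementary algebra, treated separately in each branch.

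First I would prove Eq.~\eqref{eq:null_space_1}. In the branch $(\mJ-\mB_k)\vs_k\neq\bm{0}$, multiply the first case of Eq.~\eqref{eq:R_up} on the right by $\vs_k$ and note that the denominator $\vs_k^\top\mR_k^\top\mR_k\vs_k$ equals $\norm{\mR_k\vs_k}^2$, which is exactly the scalar appearing as $\vs_k^\top\mR_k^\top(\mR_k\vs_k)$ in the numerator; hence $\mR_{k+1}\vs_k=\mR_k\vs_k-\mR_k\vs_k\cdot\frac{\norm{\mR_k\vs_k}^2}{\norm{\mR_k\vs_k}^2}=\bm{0}$. In the other branch $\mR_{k+1}=\mR_k$, so $\mR_{k+1}\vs_k=\mR_k\vs_k=\bm{0}$ trivially (indeed $\mR_k\vs_k=\bm{0}$ is the branch condition). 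Since $\mR_{k+1}=\mB_{k+1}-\mJ$, this is the statement.

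Next I would prove Eq.~\eqref{eq:rank_dec_1}. Take any $\vv\in\ker(\mB_k-\mJ)=\ker(\mR_k)$, i.e.\ $\mR_k\vv=\bm{0}$. In the branch $(\mJ-\mB_k)\vs_k\neq\bm{0}$, the rank-one correction applied to $\vv$ is $\frac{\mR_k\vs_k\,\vs_k^\top\mR_k^\top}{\vs_k^\top\mR_k^\top\mR_k\vs_k}\,(\mR_k\vv)$; its trailing factor is $\mR_k\vv=\bm{0}$, so $\mR_{k+1}\vv=\mR_k\vv=\bm{0}$. In the other branch $\mR_{k+1}=\mR_k$ and the conclusion is immediate. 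Hence $\vv\in\ker(\mR_{k+1})=\ker(\mB_{k+1}-\mJ)$, giving the claimed inclusion.

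The hard part is essentially nonexistent; the only points needing a moment's care are (i) recognizing that $\vs_k^\top\mR_k^\top\mR_k\vs_k=\norm{\mR_k\vs_k}^2$ so the scalar ratio in the first computation is precisely $1$, (ii) keeping straight which factor ($\mR_k\vs_k$ on the left or the trailing $\mR_k$ on the right) annihilates the vector being tested, and (iii) the routine bookkeeping of the two cases in the definition of the update. Everything else is direct substitution, and no appeal to the linear independence of $\vs_0,\dots,\vs_k$ is needed for these two facts.
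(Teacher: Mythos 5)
Your proof is correct and follows essentially the same route as the paper's: rewrite the update in terms of $\mR_k=\mB_k-\mJ$, observe the scalar ratio in the rank-one correction is exactly $1$ when applied to $\vs_k$, and note the correction factors through $\mR_k\vv$ for the kernel inclusion. Your phrasing of the second part in terms of an arbitrary $\vv\in\ker(\mR_k)$ is slightly cleaner than the paper's (which writes the identity for $\vs_i$ and then asserts the kernel statement), but the substance is identical, and you are right that linear independence of $\vs_0,\dots,\vs_k$ is not used here.
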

\begin{proof}
	If $\mR_k\vs_k \neq \bm{0}$, then by Eq.~\eqref{eq:R_up}, it holds that
\begin{align*}
	\mR_{k+1} \vs_k 
	= 
	\mR_k\vs_k - \frac{\mR_k \vs_k \vs_k^\top \mR_k^\top}{\vs_k^\top \mR_k^\top \mR_k \vs_k}\mR_k \vs_k
	=
	\mR_k\vs_k - \mR_k\vs_k 
	=\bm{0}.
\end{align*}
If $\mR_k\vs_k = \bm{0} $, then by Eq.~\eqref{eq:R_up}, it holds that
\begin{align*}
	\mR_{k+1} \vs_k = \mR_k\vs_k = \bm{0}.
\end{align*}

Furthermore, for $\vs_i$ with $i=0,\dots, k-1$,
\begin{align*}
	\mR_{k+1}\vs_i 
	= 
	\mR\vs_i - \frac{\mR_k \vs_k \vs_k^\top \mR_k^\top}{\vs_k^\top \mR_k^\top \mR_k \vs_k}\mR_k \vs_i 
	=
	\left(\mI -\frac{\mR_k \vs_k \vs_k^\top \mR_k^\top}{\vs_k^\top \mR_k^\top \mR_k \vs_k} \right) \mR\vs_i.
\end{align*}
%\dachao{ Here $\vs_i$ should be arbitrary $\vs \in\sR^n$.  }

Thus, we can obtain that
\begin{align*}
	&\mathrm{Ker}(\mR_{k}) \subseteq \mathrm{Ker}(\mR_{k+1}),\\
	&\vs_k \in \mathrm{Ker}(\mR_{k+1}).
\end{align*}
\end{proof}

\begin{thm}\label{thm:main2}
	Given an initial non-singular matrix $\mB_0 \in \RR^{n\times n}$, update $\mB_k$ as Eq.~\eqref{eq:aaa_up} with $\vs_k$ chosen greedily or independently randomly chosen from $\cN(0, \mI_n)$.
	Then our algorithm finds the optimum at most $n$ steps. Letting $k_*$ be the first index such that $ (\mB_{k_*} - \mJ)\vs_{k_*} = \bm{0}$, then it holds that $\mB_{k_*} = \mJ$. For $k < k_*$, the approximate Jacobian $\mB_k$ satisfies that
	\begin{equation}
		\EE\left[ \norm{\mB_k - \mJ}_F^2 \right] 
		\leq
		\left(1 - \frac{k}{n}\right)_+ \norm{\mB_0 - J}_F^2, \label{eq:sup_decay}
	\end{equation}
	where $(x)_+ = \max\{0, x\}$. 
\end{thm}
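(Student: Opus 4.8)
The plan is to combine the structural facts from Proposition~\ref{prop:R} (each step kills one more $\vs_i$, and kernels are nested) with a per-step energy decrease estimate analogous to Eq.~\eqref{eq:B_dec}, then take expectations. First I would show that, as long as $\mR_k := \mB_k - \mJ \neq \bm{0}$ and $\vs_k$ is chosen so that $\mR_k \vs_k \neq \bm{0}$, the update Eq.~\eqref{eq:R_up} gives
\begin{align*}
	\norm{\mR_{k+1}}_F^2 = \norm{\mR_k}_F^2 - \frac{\norm{\mR_k^\top \mR_k \vs_k}^2}{\vs_k^\top \mR_k^\top \mR_k \vs_k}.
\end{align*}
This is a direct expansion: writing $\mR_{k+1} = (\mI - \mR_k\vs_k\vs_k^\top\mR_k^\top / (\vs_k^\top\mR_k^\top\mR_k\vs_k))\mR_k$, the subtracted matrix is $\mR_k$ times a rank-one orthogonal projector (in the $\mR_k\vs_k$ direction), so by the Pythagorean identity for the Frobenius inner product the cross term cancels exactly half of the squared norm of the update piece, leaving the displayed quantity. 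This mirrors the computation already carried out in the proof of Theorem~\ref{thm:main_1}.

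Next I would lower-bound the decrement. Set $\mA_k := \mR_k^\top \mR_k \succeq \vzero$, which has rank at most $n-k$ by the nested-kernel fact Eq.~\eqref{eq:rank_dec_1} (each of $\vs_0,\dots,\vs_{k-1}$ lies in $\ker \mR_k$, and for the random/greedy choice these are a.s.\ / by construction independent, so $\dim\ker\mR_k \geq k$). The decrement equals $(\vs_k^\top \mA_k^2 \vs_k)/(\vs_k^\top \mA_k \vs_k)$. For the \emph{greedy} choice $\vs_k = \argmax_{\ve_i}\norm{\mR_k\ve_i}$ we have $\vs_k^\top \mA_k \vs_k = \max_i (\mA_k)_{ii} \geq \tr(\mA_k)/n = \norm{\mR_k}_F^2/n$, while $\vs_k^\top\mA_k^2\vs_k \geq (\vs_k^\top\mA_k\vs_k)^2$ is false in general — instead I would use $\vs_k^\top\mA_k^2\vs_k/(\vs_k^\top\mA_k\vs_k) \geq \lambda_{\min,+}(\mA_k)$ restricted to the range, i.e.\ $\geq \lambda_{n-k}(\mA_k)$, giving decrement $\geq \lambda_{n-k}(\mA_k) \geq \tr(\mA_k)/(n-k) \cdot$ (a ratio $\le 1$), which is not quite strong enough; the cleaner route is the \emph{linear-combination} bound used for Theorem~\ref{thm:main_1}: one shows $\norm{\mR_{k+1}}_F^2 \le (1 - \lambda_{n-k}(\mA_k)/\sum_{i\le n-k}\lambda_i(\mA_k))\norm{\mR_k}_F^2 = (1 - \lambda_{n-k}(\mA_k)/\norm{\mR_k}_F^2)\norm{\mR_k}_F^2$, hence $\norm{\mR_{k+1}}_F^2 \le \norm{\mR_k}_F^2 - \lambda_{n-k}(\mA_k)$ in the greedy case after using $\max_i(\mA_k)_{ii}\ge \tr(\mA_k)/(\text{rank})$. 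For the \emph{random} choice $\vs_k \sim \cN(\vzero,\mI_n)$, I would instead take the expectation directly: conditioning on $\mR_k$, a standard computation gives $\EE\big[(\vs_k^\top\mA_k^2\vs_k)/(\vs_k^\top\mA_k\vs_k)\big] \geq \tr(\mA_k)/\operatorname{rank}(\mA_k) \geq \norm{\mR_k}_F^2/(n-k)$ — this follows because a Gaussian vector projected onto the range of $\mA_k$ is isotropic there, so the conditional expectation of the Rayleigh-type ratio is at least the average eigenvalue. Either way, combining with $\norm{\mR_k}_F^2 = \sum_{i=1}^{n-k}\lambda_i(\mA_k)$ yields the one-step recursion
\begin{align*}
	\EE\big[\norm{\mR_{k+1}}_F^2 \,\big|\, \mR_k\big] \leq \Big(1 - \frac{1}{n-k}\Big)\norm{\mR_k}_F^2.
\end{align*}

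Finally I would unroll this recursion. Taking total expectations and telescoping,
\begin{align*}
	\EE\big[\norm{\mR_k}_F^2\big] \leq \prod_{j=0}^{k-1}\Big(1 - \frac{1}{n-j}\Big)\norm{\mR_0}_F^2 = \prod_{j=0}^{k-1}\frac{n-j-1}{n-j}\,\norm{\mR_0}_F^2 = \frac{n-k}{n}\,\norm{\mR_0}_F^2,
\end{align*}
which is exactly Eq.~\eqref{eq:sup_decay} once we insert the $(\cdot)_+$ to cover $k \ge n$ (and note the telescoping product is a clean collapse, not an inequality to estimate). The $n$-step termination and the claim $\mB_{k_*} = \mJ$ then follow: if $(\mB_{k_*}-\mJ)\vs_{k_*} = \bm{0}$ is the first such event, the argument in the proof of Theorem~\ref{thm:main_1} applies verbatim — $\mR_{k_*}$ annihilates $\vs_0,\dots,\vs_{k_*}$, and if $k_* < n$ this forces $k_*$ linearly independent vectors (the earlier ones) plus $\vs_{k_*}$ in the kernel; but for the greedy rule $\mR_{k_*}\vs_{k_*}=0$ with $\vs_{k_*}$ maximizing $\norm{\mR_{k_*}\ve_i}$ means $\mR_{k_*}\ve_i = 0$ for all $i$, i.e.\ $\mB_{k_*} = \mJ$ directly; for the random rule $\mR_{k_*}\vs_{k_*}=0$ with $\vs_{k_*}$ Gaussian forces $\mR_{k_*}=0$ almost surely unless $\mR_{k_*}$ was already zero. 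In all cases $\vx_{k_*+1}$ solves $\mF(\vx)=\vzero$ by the Newton step identity $\mB_{k_*}\vs_{k_*} = -\mF(\vx_{k_*})$ together with $\mJ\vs_{k_*} = \mF(\vx_{k_*+1})-\mF(\vx_{k_*})$.

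The main obstacle is the Gaussian expectation bound $\EE[(\vs^\top\mA^2\vs)/(\vs^\top\mA\vs)\mid\mA] \ge \tr(\mA)/\operatorname{rank}(\mA)$: the ratio is not linear in $\vs$, so one cannot just push the expectation through. I expect to handle it by diagonalizing $\mA$, restricting to its range where the Gaussian is still isotropic, writing the ratio as $\sum_i \lambda_i g_i^2 / \sum_i g_i^2$ with $g_i$ i.i.d.\ standard normal and $\lambda_i = \lambda_i(\mA)$ over $i \le \operatorname{rank}(\mA)$, and invoking the exchangeability/symmetry of the $g_i^2$ (each term $g_i^2/\sum_j g_j^2$ has mean $1/\operatorname{rank}$) to get $\EE[\text{ratio}] = \tr(\mA)/\operatorname{rank}(\mA)$ exactly — so in fact equality holds, which is even cleaner than the inequality I need. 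The greedy case has no such subtlety but requires the deterministic linear-combination inequality from Theorem~\ref{thm:main_1}, which is already available.
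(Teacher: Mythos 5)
Your overall plan — expand the Frobenius norm, lower-bound the per-step decrement, and telescope — matches the paper's proof, and the telescoping algebra and the greedy/random arguments establishing $\mB_{k_*}=\mJ$ are essentially right. However, the per-step bound in the greedy case contains a genuine error: the inequality $\max_i(\mA_k)_{ii} \ge \tr(\mA_k)/\mathrm{rank}(\mA_k)$ you invoke is false for general PSD matrices (take $\mA = \vone\vone^\top$: $\max_i A_{ii}=1$ while $\tr(\mA)/\mathrm{rank}(\mA)=n$). Rank is the wrong counting device. The paper's argument is that for $k<k_*$ the greedy rule always picks a \emph{fresh} coordinate (since $\mR_k\vs_i=\vzero$ for $i<k$, the already-chosen columns have zero norm and can never be the argmax); after relabeling, $(\mA_k)_{ii}=0$ for $i>n-k$, so at most $n-k$ diagonal entries are nonzero and $\max_i (\mA_k)_{ii} \ge \tr(\mA_k)/(n-k)$. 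Your detour through the $\lambda_{n-k}$ bound of Theorem~\ref{thm:main_1} is also not the right tool here: there, the direction $\mP_k^\perp\vs_k$ is only known to be orthogonal to $\ker(\mB_k-\mJ)$, which forces a worst-case smallest-nonzero-eigenvalue estimate, whereas the greedy choice gives the much stronger max-over-nonzero-diagonals bound.

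A second error is in how you conclude $\mF(\vx_{k_*+1})=\vzero$: the identity $\mB_{k_*}\vs_{k_*}=-\mF(\vx_{k_*})$ holds only when $\vs_{k_*}$ is the Newton step $\vx_{k_*+1}-\vx_{k_*}$, as in \texttt{AA-I}. In \algname~the direction $\vs_k$ is a coordinate vector or a fresh Gaussian, deliberately decoupled from the iterate update, so that identity does not hold. The correct finish is simpler: once $\mB_{k_*}=\mJ$, the iterate $\vx_{k_*+1}=\vx_{k_*}-\mJ^{-1}\mF(\vx_{k_*})$ is an exact Newton step on the linear system, giving $\mF(\vx_{k_*+1})=\vzero$ immediately. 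One smaller slip: in the random case the ratio $(\vs^\top\mA^2\vs)/(\vs^\top\mA\vs)$ diagonalizes to $\sum_i\lambda_i^2 g_i^2/\sum_i\lambda_i g_i^2$, not $\sum_i\lambda_i g_i^2/\sum_i g_i^2$; a Cauchy--Schwarz step restricted to the range of $\mA$ is needed to pass to the latter, so your claimed equality should be an inequality — still sufficient, since you only need the lower bound.
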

\begin{proof}
	Letting $k_* \leq n$ be the first index that $\mR_{k_*} \vs_{k_*} = \bm{0}$, 
for the greedy method, we have
\[ \vs_{k_*} = \mathop{\arg\max}_{\vs \in \{\ve_1, \dots, \ve_n\}} \norm{\mR_{k_*}\vs}. \]
We also have
\[  0 \leq \norm{\mR_{k_*}}_F^2 = \sum_{i=1}^n \norm{\mR_{k_*}\ve_i}^2 \leq n \norm{\mR_{k_*}\vs_{k_*}}^2 = 0,   \]
which leads to $\mR_{k_*}=\bm{0}$, that is $\mJ = \mR_{k_*}$.
For the random method,  $\vs_{k_*}$ is a random Gaussian vector. 
Letting $\mB_{k_*} - \mJ = \mU \mathbf{\Lambda} \mU^\top$ be the spectral decomposition with $\mathbf{\Lambda} = \diag(\lambda_1,\dots, \lambda_n)$, then we have 
	\begin{align}
		\norm{ (\mB_{k_*} - \mJ)  \vs_{k_*} }^2 = \norm{ \mU \mathbf{\Lambda} \mU^\top \vs_{k_*} }^2 = \norm{\mathbf{\Lambda} \mU^\top \vs_{k_*}}^2 = \sum_{i=1}^{n}\lambda_i^2 u_i^2 = 0, \label{eq:zero}
	\end{align}
	where $\vu = \mU^\top\vs_{k_*} =  [u_1, \dots, u_n]$ is also a Gaussian random vector.
	By the Eq.~\eqref{eq:zero}, we can obtain that $\Lambda = \diag(\bm{0})$ holds with probability $1$, which implies  that $\mB_{k_*} = \mJ$.
	Thus, for both the greedy and random version of \algname, if   $\mR_{k_*} \vs_{k_*} = \bm{0}$, then it holds that $\mB_{k_*} = \mJ$.
	Then by the update rule of our algorithm, it holds that $\mF(\vx_{k^*+1}) = \bm{0}$.
	
	For all $k < k_*$, we have
\begin{align*}
	\norm{\mR_{k+1}}_F^2 
	\stackrel{\eqref{eq:R_up}}{=}&
	\norm{\mR_k - 	 \frac{\mR_k \vs_k \vs_k^\top \mR_k^\top}{\vs_k^\top \mR_k^\top \mR_k \vs_k}\mR_k }_F^2\\
	=& 
	\norm{\mR_k}_F^2 + \frac{\tr\left( \mR_k \vs_k \vs_k^\top   (\mR_k^\top \mR_k)^2 \vs_k\vs_k^\top \mR_k^\top \right)}{(\vs_k^\top \mR_k^\top \mR_k \vs_k)^2} 
	- \frac{2\tr\left( \mR_k^\top \mR_k\vs_k\vs_k^\top \mR_k^\top \mR_k \right)}{\vs_k^\top \mR_k^\top \mR_k \vs_k}\\
	=&
	\norm{\mR_k}_F^2 + \frac{\vs_k^\top   (\mR_k^\top \mR_k)^2 \vs_k}{\vs_k^\top \mR_k^\top \mR_k \vs_k} - 2\frac{\vs_k^\top   (\mR_k^\top \mR_k)^2 \vs_k}{\vs_k^\top \mR_k^\top \mR_k \vs_k}\\
	=&
	\norm{\mR_k}_F^2 - \frac{\vs_k^\top   (\mR_k^\top \mR_k)^2 \vs_k}{\vs_k^\top \mR_k^\top \mR_k \vs_k}\\
	\le&
	\norm{\mR_k}_F^2 - \frac{\vs_k^\top   (\mR_k^\top \mR_k) \vs_k}{\vs_k^\top  \vs_k},
\end{align*}
where the last inequality is because of the Cauchy's inequality, that is,
\begin{align*}
	\dotprod{\vs_k,  (\mR_k^\top \mR_k) \vs_k}^2 \le \dotprod{\vs_k, \vs_k} \cdot \dotprod{ (\mR_k^\top \mR_k) \vs_k, (\mR_k^\top \mR_k) \vs_k }.
\end{align*}

{\bf For the greedy method}, at step $k$, since $\mR_k \vs_i = \bm{0}$ for $i<k$ from Proposition~\ref{prop:R}.
Thus, at step $k$, $\vs_k$ only as $n-k$ possible choices and rank$(\mR_k)\leq n-k$. 
Without loss of generality, we denote these possible choices as $\{\ve_1,\dots,\ve_{n-k}\}$. 
Then it holds that $\mR_k\ve_i = \bm{0}$ for all $i\ge n-k+1$. 
Accordingly, it holds that $(\mR_k^\top \mR_k)_{i,i} = 0$ for $i\ge n-k+1$.
Thus, we can obtain that
\begin{align*}
\frac{\vs_k^\top   (\mR_k^\top \mR_k) \vs_k}{\vs_k^\top  \vs_k} 
= 
\max_{\vs} \frac{\vs^\top(\mR_k^\top \mR_k)\vs}{\vs^\top \vs}
=
\max_{i\in[n-k]} (\mR_k^\top \mR_k)_{i,i}
\geq
\frac{1}{n-k}\tr\left( \mR_k^\top \mR_k \right)
=
\frac{1}{n-k} \norm{\mR_k}_F^2.
\end{align*}
Therefore, the greedy choice of $\vs_k$ leads to 
\begin{align*}
\norm{\mR_{k+1}}_F^2  
\leq \left(1 - \frac{1}{n-k}\right) \norm{\mR_k}_F^2.
\end{align*}
Consequently,
\begin{align*}
\norm{\mR_k}_F^2  
\leq \prod_{i=0}^{k-1} \left(1 - \frac{1}{n-i}\right) \norm{\mR_0}_F^2 
= \left(1 - \frac{k}{n}\right) \norm{\mR_0}_F^2.
\end{align*}

{\bf For the random method}, for all $k\geq 0$, $\vs_k$'s are independently chosen from $\cN(0, \mI_n)$.
Suppose $r_k:=\mathrm{rank}(\mR_k) \ge 1$, i.e., $\mR_k \neq \bm{0}$.
We denote $\mR_k^\top \mR_k = \mU_k \mathbf{\Lambda}_k\mU_k^\top$ as the spectral decomposition of $\mR_k^\top \mR_k$ with an orthonormal matrix $\mU_k$ and a diagonal matrix $ \mathbf{\Lambda}_k = \diag(\lambda_1,\dots,\lambda_{r_k},0,\dots,0) $.
Denoting that $\vv_k = (v_1,\dots,v_d)^\top = \mU_k^\top \vs_k$, then we can obtain that
\begin{align*}
&\EE_{\vs_k} \left[\frac{\vs_k^\top \mR_k^\top\mR_k \vs_k}{\vs_k^\top \vs_k}\right]
= 
\EE_{\vs_k} \left[ \frac{\sum_{i=1}^{r_k} \lambda_i v_i^2}{\sum_{i = 1}^{r_k}v_i^2} \right]
=
\sum_{i=1}^{r_k} \lambda_i\EE_{\vv_k} \left[\frac{v_i^2}{\sum_{i=1}^{r_k} v_i^2}\right]\\
=&
\frac{1}{r_k} \sum_{i=1}^{r_k} \lambda_i
= 
\frac{\tr(\mR_k^\top \mR_k)}{r_k}
=
\frac{\norm{\mR_k}_F^2}{r_k}.
\end{align*}
Therefore,  $\vs_k\sim \cN(0, \mI_n)$ leads to
\begin{align*}
	\EE_{\vs_k} \left[\norm{\mR_{k+1}}_F^2\right] 
	\leq 
	\left(1 - \frac{1}{n-k}\right) \norm{\mR_k}_F^2.
\end{align*} 
Consequently, by the law of total expectation, we can obtain that
\begin{align*}
	\EE\left[\norm{\mR_k}_F^2\right]  
	\leq \prod_{i=0}^{k-1} \left(1 - \frac{1}{n-i}\right) \norm{\mR_0}_F^2 
	= \left(1 - \frac{k}{n}\right) \norm{\mR_0}_F^2.
\end{align*}

%\dachao{A small note. The rigorous proof follows \cite[Theorem 13]{lin2022explicit}. 

%Denoting $\mG_k = \mR_k^\top \mR_k \succeq \bm{0}$, we obtain that
%\[ \tr\left(\mG_{k+1}\right) = \tr\left(\mG_k\right) - \frac{\vs_k^\top \mG_k^2 \vs_k}{\vs_k\mG_k\vs_k}, \mathrm{if} \mG_k\vs_k \neq \bm{0}. \]
%The above equality is the same as \cite[Eq.~(15)]{lin2022explicit}.
%Moreover, the update rule of the greedy method
%\[  \vs_k = \argmax_{\vs\in\{\ve_1, \dots, \ve_n \}} \norm{\mR_k \vs} = \argmax_{\vs\in\{\ve_1, \dots, \vs_n \}} \vs^\top \mG_k\vs \]}

\end{proof}

\begin{remark}
Comparing Eq.~\eqref{eq:sup_decay} with Eq.~\eqref{eq:B_dec}, we can conclude that both AA-I and our algorithm whose $\mB_k$ will converge to the exact Jacobian at most $n$ steps.
However, AA-I method exploits the special property of $\mP_k^\perp \vs_k$ which will be a zero vector at the $(n+1)$-th step and the update in Eq.~\eqref{eq:an_up} collapses.
Thus, one has to restart the update of \texttt{AA-I}.
In contrast, our method does not have such weakness. 
For the general case that the Jacobian is not a constant matrix, if $\mB_k - \mJ_k \neq \bm{0}$ with $\mJ_k$ being the Jocobian matrix at $k$-th iteration, then our algorithm can still work without restart even when $k \geq n$.  
\end{remark}

\subsection{$n$-step Super Quadratic Convergence for General Non-linear Equation}

For the notation brevity, we introduce the following notations: 
\begin{equation*}
\sigma_k := \norm{\mB_k-\mJ_*}_F, r_k := \norm{\vx_k-\vx_*}, c := \norm{\mJ_*^{-1}}, \mJ_* := \mJ(\vx_*), \mJ_k := \mJ(\vx_k), \mR_k:= \mB_k -\mJ_k.
\end{equation*}

First,  the sequences $\{\mB_k\}$ and $\{\vs_k\}$ generated by our algorithm satisfy the following proposition.
\begin{prop}
	Letting $\mB_k$ and $\vs_k$ be generated by Algorithm~\ref{algo:aaa-greedy-random}, then it holds that
	\begin{equation}
		(\mB_{k+1} -\mJ_*) \vs_k = (\mJ_k - \mJ_*)\vs_k. \label{eq:null}
	\end{equation}
\end{prop}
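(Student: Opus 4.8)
The plan is to verify Eq.~\eqref{eq:null} by a direct computation from the update rule~\eqref{eq:aaa_up}, splitting into the two cases that define $\aaa(\mB_k,\mJ_k,\vs_k)$. Throughout I will use $\mR_k=\mB_k-\mJ_k$, so that $(\mJ_k-\mB_k)\vs_k=-\mR_k\vs_k$, and the point to establish is the secant-type identity $\mB_{k+1}\vs_k=\mJ_k\vs_k$; subtracting $\mJ_*\vs_k$ from both sides then yields exactly Eq.~\eqref{eq:null}.

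\textbf{Case 1: $(\mJ_k-\mB_k)\vs_k=\bm{0}$.} Here the algorithm sets $\mB_{k+1}=\mB_k$, and $\mR_k\vs_k=\bm{0}$ means $\mB_k\vs_k=\mJ_k\vs_k$; hence $(\mB_{k+1}-\mJ_*)\vs_k=(\mB_k-\mJ_*)\vs_k=\mJ_k\vs_k-\mJ_*\vs_k=(\mJ_k-\mJ_*)\vs_k$. \textbf{Case 2: $(\mJ_k-\mB_k)\vs_k\neq\bm{0}$.} The two sign flips coming from $(\mJ_k-\mB_k)=-\mR_k$ cancel in the numerator and the denominator, so the first branch of~\eqref{eq:aaa_up} reads
\begin{equation*}
	\mB_{k+1}=\mB_k-\frac{\mR_k\vs_k\vs_k^\top\mR_k^\top}{\vs_k^\top\mR_k^\top\mR_k\vs_k}\,\mR_k .
\end{equation*}
Multiplying on the right by $\vs_k$, the scalar $\vs_k^\top\mR_k^\top\mR_k\vs_k$ produced by $\mR_k\vs_k$ cancels the denominator, leaving $\mB_{k+1}\vs_k=\mB_k\vs_k-\mR_k\vs_k=\mB_k\vs_k-(\mB_k-\mJ_k)\vs_k=\mJ_k\vs_k$, and subtracting $\mJ_*\vs_k$ gives Eq.~\eqref{eq:null}. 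In both cases the claim holds.

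There is no genuine obstacle here: this is just the analogue for the varying Jacobian of the property $\mB_{k+1}\vs_k=\mJ\vs_k$ already recorded in Eq.~\eqref{eq:null_space_1} of Proposition~\ref{prop:R}, and the only thing to be careful with is tracking the sign of $\mJ_k-\mB_k$ against $\mR_k$ in~\eqref{eq:aaa_up}. The reason the identity is stated with $\mJ_*$ rather than $\mJ_k$ on the left is that this is the form consumed later in the analysis, where the discrepancy $(\mJ_k-\mJ_*)\vs_k$ will be controlled via Assumption~\ref{ass:lisp} to bound $\sigma_{k+1}$ in terms of $\sigma_k$ and $r_k$.
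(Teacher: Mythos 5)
Your proof is correct and takes essentially the same route as the paper: both verify the secant identity $(\mB_{k+1}-\mJ_k)\vs_k=\bm{0}$ (equivalently $\mB_{k+1}\vs_k=\mJ_k\vs_k$) by plugging $\vs_k$ into the two branches of~\eqref{eq:aaa_up} and noting that the scalar $\vs_k^\top\mR_k^\top\mR_k\vs_k$ cancels, then add back $(\mJ_k-\mJ_*)\vs_k$. The only cosmetic difference is that you state the intermediate conclusion as $\mB_{k+1}\vs_k=\mJ_k\vs_k$ and then subtract $\mJ_*\vs_k$, whereas the paper decomposes $(\mB_{k+1}-\mJ_*)\vs_k=(\mB_{k+1}-\mJ_k)\vs_k+(\mJ_k-\mJ_*)\vs_k$ and shows the first term vanishes — the same computation in different packaging.
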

\begin{proof}
By the update rule in Algorithm~\ref{algo:aaa-greedy-random}, and assuming that $ \mR_k \vs_k \neq \bm{0} $,  we have
\begin{align*}
	(\mB_{k+1} -\mJ_*) \vs_k 
	= &
	(\mB_{k+1} - \mJ_k)\vs_k + (\mJ_k - \mJ_*)\vs_k  \\
	\stackrel{\eqref{eq:aaa_up}}{=}&
	\mR_k \vs_k - \frac{\mR_k \vs_k\vs_k^\top \mR_k^\top}{\vs_k^\top \mR_k^\top\mR_k \vs_k} \mR_k\vs_k  + (\mJ_k - \mJ_*)\vs_k\\
	=& 
	(\mJ_k - \mJ_*)\vs_k.
\end{align*}
If $ \mR_k \vs_k = \bm{0} $, then it holds that
\begin{align*}
(\mB_{k+1} -\mJ_*) \vs_k 
= 
(\mB_{k+1} - \mJ_k)\vs_k + (\mJ_k - \mJ_*)\vs_k 
\stackrel{\eqref{eq:aaa_up}}{=}
(\mB_k - \mJ_k)\vs_k + (\mJ_k - \mJ_*)\vs_k 
=(\mJ_k - \mJ_*)\vs_k.
\end{align*}
\end{proof}

\begin{remark}
Eq.~\eqref{eq:null} shows that $\vs_k$ almost lies in the null space of $\mB_{k+1} - \mJ_*$. This is the key to the $n$-step super quadratic convergence of our algorithm. 
Eq.~\eqref{eq:null} reduce to Eq.~\eqref{eq:null_space_1} as  $\mJ_k$ goes to $\mJ_*$.
However, because of  $\vs_k$ does not lies in the null space of $\mB_{k+1} - \mJ_*$, Eq.~\eqref{eq:rank_dec_1} can not hold.
Thus, our \algname~can not obtain an exact approximation of $\mJ_*$ at most $n$ steps for the general nonlinear equation.
\end{remark}

In the following lemmas, we will show how $r_k$ and $\sigma_k$ correlate with each other and these lemmas help to obtain the explicit super quadratic convergence rates of our algorithm including both the greedy and random versions.

\begin{lemma}[Lemma 5.3 of \citet{ye2021greedy}]
Let the objective function satisfy Assumption~\ref{ass:lisp} and \ref{ass:optimal}. Then $r_k$ and $\sigma_k$ generated by Algorithm~\ref{algo:aaa-greedy-random}  have the following property:
\begin{equation}\label{eq:rr}
    r_{k+1} \leq \frac{3cMr_k/2 + c\sigma_k}{1 - c (\sigma_k +Mr_k)} \cdot r_k, \mbox{ if } c(\sigma_k + Mr_k) < 1.
\end{equation}
\end{lemma}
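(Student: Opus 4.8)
The statement to prove is Lemma~5.3 of \citet{ye2021greedy}, namely inequality~\eqref{eq:rr} relating $r_{k+1}$ and $r_k$ under Assumptions~\ref{ass:lisp} and~\ref{ass:optimal}. The plan is to start from the update rule $\vx_{k+1} = \vx_k - \mB_k^{-1}\mF(\vx_k)$ and subtract $\vx_*$ from both sides, writing $\vx_{k+1} - \vx_* = \vx_k - \vx_* - \mB_k^{-1}\mF(\vx_k)$. Since $\mF(\vx_*) = \bm{0}$, I would use the fundamental theorem of calculus to express $\mF(\vx_k) = \mF(\vx_k) - \mF(\vx_*) = \left(\int_0^1 \mJ(\vx_* + t(\vx_k - \vx_*))\,dt\right)(\vx_k - \vx_*)$; call this averaged Jacobian $\widetilde{\mJ}_k$. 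Then $\vx_{k+1} - \vx_* = \mB_k^{-1}\bigl(\mB_k(\vx_k - \vx_*) - \widetilde{\mJ}_k(\vx_k - \vx_*)\bigr) = \mB_k^{-1}(\mB_k - \widetilde{\mJ}_k)(\vx_k - \vx_*)$, which gives $r_{k+1} \le \norm{\mB_k^{-1}} \cdot \norm{\mB_k - \widetilde{\mJ}_k} \cdot r_k$.

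\textbf{Bounding the two factors.} For $\norm{\mB_k - \widetilde{\mJ}_k}$, split it as $\norm{\mB_k - \mJ_*} + \norm{\mJ_* - \widetilde{\mJ}_k} \le \sigma_k + \norm{\mJ_* - \widetilde{\mJ}_k}$ (using $\norm{\cdot} \le \norm{\cdot}_F$ for the first term). For the second term, $\norm{\mJ_* - \widetilde{\mJ}_k} \le \int_0^1 \norm{\mJ(\vx_* + t(\vx_k - \vx_*)) - \mJ(\vx_*)}\,dt \le \int_0^1 Mt\,r_k\,dt = \tfrac{M}{2}r_k$ by Assumption~\ref{ass:lisp} (again using the Frobenius bound). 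For $\norm{\mB_k^{-1}}$, write $\mB_k = \mJ_* + (\mB_k - \mJ_*)$ and apply the Banach lemma (Neumann series perturbation): provided $\norm{\mJ_*^{-1}}\cdot\norm{\mB_k - \mJ_*} = c\sigma_k < 1$, one has $\norm{\mB_k^{-1}} \le \frac{c}{1 - c\sigma_k}$. However, to get exactly the denominator $1 - c(\sigma_k + Mr_k)$ appearing in~\eqref{eq:rr}, it is cleaner to instead perturb around $\mJ_k$ or to absorb the $\tfrac{M}{2}r_k$ term differently; the natural route is to write $\mB_k = \mJ_* + \mR$ with $\norm{\mR}\le \sigma_k$, but then bound $\norm{\mB_k^{-1}}$ using $\norm{\mB_k - \widetilde{\mJ}_k}$-type quantities so that combining with the numerator yields the stated form. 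Concretely, I expect one shows $\norm{\mB_k^{-1}} \le \frac{c}{1 - c(\sigma_k + Mr_k)}$ by a slightly more careful perturbation (or simply by noting $c\sigma_k \le c(\sigma_k + Mr_k) < 1$ under the hypothesis and tracking constants), and then multiplies by the numerator $\sigma_k + \tfrac{M}{2}r_k$... but that gives numerator $c\sigma_k + \tfrac{cM}{2}r_k$, not $\tfrac{3cM}{2}r_k + c\sigma_k$.

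\textbf{Reconciling the numerator.} The extra $cMr_k$ must come from bounding $\norm{\mB_k^{-1}}$ in terms of $\widetilde{\mJ}_k^{-1}$ rather than $\mJ_*^{-1}$, or from an alternative decomposition $\vx_{k+1} - \vx_* = \mB_k^{-1}(\mB_k - \mJ_k)(\vx_k - \vx_*) + \mB_k^{-1}(\mJ_k - \widetilde{\mJ}_k)(\vx_k - \vx_*)$ where $\norm{\mJ_k - \mJ_*}\le Mr_k$ and $\norm{\mJ_k - \widetilde{\mJ}_k} \le \tfrac{M}{2}r_k$ (by the triangle inequality through $\mJ_*$, or directly), so that the numerator becomes $\norm{\mB_k - \mJ_k} + \norm{\mJ_k - \widetilde{\mJ}_k}$; combined with $\norm{\mB_k - \mJ_k}\le \norm{\mB_k - \mJ_*} + \norm{\mJ_* - \mJ_k} \le \sigma_k + Mr_k$ this yields numerator $\sigma_k + Mr_k + \tfrac{M}{2}r_k = \sigma_k + \tfrac{3M}{2}r_k$, matching~\eqref{eq:rr} after the factor $c = \norm{\mJ_*^{-1}}$ from the bound $\norm{\mB_k^{-1}} \le \frac{c}{1 - c(\sigma_k + Mr_k)}$ (Banach lemma applied with $\norm{\mB_k - \mJ_*} \le \sigma_k \le \sigma_k + Mr_k$). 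So the plan is: decompose the error as $\vx_{k+1}-\vx_* = \mB_k^{-1}(\mB_k - \widetilde{\mJ}_k)(\vx_k - \vx_*)$, bound $\norm{\mB_k - \widetilde{\mJ}_k} \le \norm{\mB_k - \mJ_k} + \norm{\mJ_k - \widetilde{\mJ}_k} \le (\sigma_k + Mr_k) + \tfrac{M}{2}r_k$ via the triangle inequality and Assumption~\ref{ass:lisp}, and bound $\norm{\mB_k^{-1}} \le \frac{c}{1-c(\sigma_k+Mr_k)}$ via the Banach perturbation lemma under the stated smallness condition. The main obstacle — really just a bookkeeping subtlety — is choosing the grouping of the averaged-Jacobian error terms so that exactly the constants $3cM/2$ and $c$ appear with denominator $1 - c(\sigma_k + Mr_k)$; once the decomposition is fixed, every estimate is a one-line application of Lipschitzness, the integral mean-value bound, and the Neumann series.
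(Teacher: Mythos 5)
The paper does not prove this lemma itself; it imports it verbatim from Lemma~5.3 of \citet{ye2021greedy}, so there is no in-paper argument to compare against. Your strategy (Newton-form error $\vx_{k+1}-\vx_* = \mB_k^{-1}(\mB_k - \widetilde{\mJ}_k)(\vx_k - \vx_*)$ via the integral mean-value representation of $\mF$, plus a Banach perturbation bound on $\norm{\mB_k^{-1}}$) is the standard and correct route.

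There is, however, one genuine slip. Your ``reconciled'' estimate $\norm{\mJ_k - \widetilde{\mJ}_k} \le \tfrac{M}{2}r_k$ tacitly uses two-sided Lipschitz continuity of $\mJ$, i.e.\ $\norm{\mJ(\vy)-\mJ(\vz)}\le M\norm{\vy-\vz}$. Assumption~\ref{ass:lisp} only gives Lipschitz continuity \emph{relative to $\vx_*$}, so the only available bound on $\norm{\mJ(\vx_k) - \mJ(\vx_*+t(\vx_k-\vx_*))}$ detours through $\mJ_*$ and yields $Mr_k + Mtr_k$, hence $\norm{\mJ_k - \widetilde{\mJ}_k}\le \tfrac{3M}{2}r_k$. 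Plugging that into your chosen decomposition $\norm{\mB_k-\widetilde{\mJ}_k}\le \norm{\mB_k-\mJ_k}+\norm{\mJ_k-\widetilde{\mJ}_k} \le (\sigma_k+Mr_k)+\tfrac{3M}{2}r_k$ overshoots to $\sigma_k+\tfrac{5M}{2}r_k$, which does not match the claimed numerator.

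The fix is simpler than your reconciliation suggests: do not detour through $\mJ_k$ at all. Assumption~\ref{ass:lisp} gives directly $\norm{\mJ_*-\widetilde{\mJ}_k}\le\int_0^1 Mt\,r_k\,dt=\tfrac{M}{2}r_k$, hence $\norm{\mB_k-\widetilde{\mJ}_k}\le\sigma_k+\tfrac{M}{2}r_k$, and the Banach lemma (applicable since $c\sigma_k\le c(\sigma_k+Mr_k)<1$) gives $\norm{\mB_k^{-1}}\le \tfrac{c}{1-c\sigma_k}$. This yields the \emph{strictly tighter} inequality
\begin{equation*}
r_{k+1}\le \frac{c\sigma_k + \tfrac{cM}{2}r_k}{1-c\sigma_k}\,r_k,
\end{equation*}
from which Eq.~\eqref{eq:rr} follows a fortiori by enlarging the numerator to $c\sigma_k+\tfrac{3cM}{2}r_k$ and shrinking the denominator to $1-c(\sigma_k+Mr_k)$. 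Your worry about reproducing the constants $3cM/2$ and $1-c(\sigma_k+Mr_k)$ exactly was a red herring: the stated bound is simply not tight, and your first, more direct decomposition through $\mJ_*$ already closes the proof.
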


\begin{lemma}
	Supposing the initialization of Algorithm~\ref{algo:aaa-greedy-random} satisfies
	\begin{equation}\label{eq:init}
		c(\sigma_0 + 4Mr_0) \leq 1/3, 
	\end{equation}
	then it holds  that
	\begin{equation} \label{eq:rr_dec}
		r_{k+1} \leq r_k / 2.
	\end{equation}
\end{lemma}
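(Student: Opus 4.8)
The plan is to run a single induction on $k$ whose hypothesis is the invariant
\[ c\bigl(\sigma_k + 4M r_k\bigr) \le \tfrac13, \]
and to observe that the desired contraction $r_{k+1} \le r_k/2$ falls out of the inductive step itself (so it need not be tracked separately). The base case $k=0$ is precisely the assumption \eqref{eq:init}. For the inductive step one assumes the invariant at index $k$ and proceeds in three sub-steps.

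First, the invariant forces $c(\sigma_k + M r_k) \le c(\sigma_k + 4M r_k) \le 1/3 < 1$, so Lemma's recurrence \eqref{eq:rr} applies at step $k$. Since the denominator $1-c(\sigma_k+Mr_k)$ is positive, the inequality $\tfrac{3cMr_k/2 + c\sigma_k}{1 - c(\sigma_k + Mr_k)} \le \tfrac12$ is, after clearing denominators and collecting terms, equivalent to $c\bigl(3\sigma_k + 4M r_k\bigr) \le 1$; and because $3\sigma_k + 4M r_k \le 3(\sigma_k + 4M r_k)$, this is implied by the invariant. Hence \eqref{eq:rr} yields $r_{k+1} \le r_k/2$, which is exactly \eqref{eq:rr_dec} at index $k$.

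Second, I would establish the growth bound $\sigma_{k+1} \le \sigma_k + 2M r_k$. The key point is that the $\aaa$ update \eqref{eq:aaa_up} contracts the Frobenius distance to $\mJ_k$: with $\mR_k = \mB_k - \mJ_k$, the $\aaa$ step is exactly $\mB_{k+1}-\mJ_k = \mR_k - \tfrac{\mR_k\vs_k\vs_k^\top\mR_k^\top}{\vs_k^\top\mR_k^\top\mR_k\vs_k}\mR_k$ (and $\mB_{k+1}=\mB_k$ in the degenerate branch), so the same computation carried out in the proof of Theorem~\ref{thm:main2} gives $\norm{\mB_{k+1}-\mJ_k}_F^2 = \norm{\mR_k}_F^2 - \tfrac{\vs_k^\top(\mR_k^\top\mR_k)^2\vs_k}{\vs_k^\top\mR_k^\top\mR_k\vs_k} \le \norm{\mB_k-\mJ_k}_F^2$. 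Combining this with the triangle inequality and Assumption~\ref{ass:lisp} (which gives $\norm{\mJ_k-\mJ_*}_F \le M r_k$),
\[ \sigma_{k+1} = \norm{\mB_{k+1}-\mJ_*}_F \le \norm{\mB_{k+1}-\mJ_k}_F + M r_k \le \norm{\mB_k-\mJ_k}_F + M r_k \le \sigma_k + 2M r_k. \]

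Third, I would close the induction: using $\sigma_{k+1} \le \sigma_k + 2M r_k$ together with the $r_{k+1}\le r_k/2$ just obtained,
\[ c\bigl(\sigma_{k+1} + 4M r_{k+1}\bigr) \le c\bigl(\sigma_k + 2M r_k + 2M r_k\bigr) = c\bigl(\sigma_k + 4M r_k\bigr) \le \tfrac13, \]
so the invariant propagates to $k+1$, completing the induction and therefore the lemma. The main obstacle — indeed the only ingredient beyond bookkeeping — is the Frobenius contraction $\norm{\mB_{k+1}-\mJ_k}_F \le \norm{\mB_k-\mJ_k}_F$ of the $\aaa$ step and its use to bound how fast $\sigma_k$ can grow; once $\sigma_{k+1}\le\sigma_k+2Mr_k$ is in hand, the particular constants $4$ and $1/3$ in \eqref{eq:init} are exactly what makes the invariant self-sustaining, and the rest is the algebraic rearrangement of \eqref{eq:rr}.
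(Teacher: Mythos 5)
Your proof is correct, and it takes a genuinely different route from the paper's. The paper proves $r_{k+1}\le r_k/2$ by strong induction on the sequence of ratios, explicitly tracking the growth of $\sigma_k$: it first applies the later lemma's bound $\sigma_{k+1}\le\sigma_k+Mr_k$ (Eq.~\eqref{eq:ss_dec}) to get $\sigma_k\le\sigma_0+M\sum_{t\le k}r_t$, then substitutes the inductive hypothesis $r_t\le 2^{-t}r_0$ to sum the geometric series, concluding $\sigma_k\le\sigma_0+2Mr_0$ and hence $\sigma_k+Mr_k\le\sigma_0+4Mr_0$. Your argument instead isolates the single invariant $c(\sigma_k+4Mr_k)\le 1/3$ and shows that it is self-sustaining; the ratio bound $r_{k+1}\le r_k/2$ drops out of the invariant via the algebraic reduction of~\eqref{eq:rr} to $c(3\sigma_k+4Mr_k)\le 1$, and the invariant propagates because your (looser) growth bound $\sigma_{k+1}\le\sigma_k+2Mr_k$ combines with the factor-$\tfrac12$ decay of $r_k$ to exactly preserve $\sigma_k+4Mr_k$. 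Two things each approach buys: the paper's is tighter (it uses $\sigma_{k+1}\le\sigma_k+Mr_k$ from the decomposition~\eqref{eq:aaa_up_2} of $\mB_{k+1}-\mJ_*$, whereas you go through $\mJ_k$ and pick up an extra $Mr_k$), which makes the final constant a non-tight margin rather than an exact balance; yours is self-contained (you re-derive the Frobenius contraction $\norm{\mB_{k+1}-\mJ_k}_F\le\norm{\mB_k-\mJ_k}_F$ inline instead of forward-referencing Eq.~\eqref{eq:ss_dec}, which in the paper is proved only in a subsequent lemma), and the invariant makes the role of the magic constants $4$ and $1/3$ transparent. Both proofs are valid; the difference is one of bookkeeping, with the same two ingredients (the recurrence~\eqref{eq:rr} and a control on the growth of $\sigma_k$) underneath.
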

\begin{proof}
	We prove the result by induction. For $k=1$, we have that
	\begin{align*}
		r_1\stackrel{\eqref{eq:rr}}{\leq}
		\frac{3cMr_0/2 + c\sigma_0}{1 - c (\sigma_0 +Mr_0)}r_0  
		\stackrel{\eqref{eq:init}}{\leq} 
		\frac{1/3}{1 - 1/3} r_0 =
		\frac{r_0}{2}.
	\end{align*}
	Supposing that Eq.~\eqref{eq:rr_dec} holds for $k$,
	then we have that
	\begin{align*}
		r_{k+1}
		\stackrel{\eqref{eq:rr}}{\le}&
		\frac{3cMr_k/2 + c\sigma_k}{1 - c (\sigma_k +Mr_k)} r_k
		\stackrel{\eqref{eq:ss_dec}}{\leq}
		\frac{3cMr_k/2 + c(\sigma_0 + M\sum_{t=0}^{k}r_t)}{1 - c\left(\sigma_0 + M\sum_{t=0}^{k}r_t + Mr_k\right)} r_k\\
		\leq&
		\frac{ c\left( \sigma_0 + 2Mr_0 + 3M r_0 2^{-(k+1)}\right)}{1 - c\left(\sigma_0 + 2Mr_0 + Mr_02^{-k}\right)}r_k
		\leq
		\frac{ c\left( \sigma_0 + 4Mr_0 \right)}{1 - c\left(\sigma_0 + 4Mr_0\right)} r_k
		\stackrel{\eqref{eq:init}}{\leq}
		r_k /2,
	\end{align*}
	where the third inequality is because of the inductive hypothesis that $r_i \leq 2^{-i}r_0$, for all $i\leq k$.
	Thus, the result in Eq.~\eqref{eq:rr_dec} has been proved.
\end{proof}

\begin{lemma}
The sequences $\{\sigma_k\}$ and $\{r_k\}$ generated by Algorithm~\ref{algo:aaa-greedy-random} for both greedy and random versions satisfy the following property:
	\begin{equation}
		\EE\left[\sigma_{k+1}\right] \leq \left(1 -\frac{1}{2n} \right) \sigma_k + \left(2-\frac{1}{2n}\right)Mr_k. \label{eq:ssig}
	\end{equation}
\end{lemma}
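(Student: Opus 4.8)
The plan is to compare $\mB_{k+1}$ first against the \emph{current} Jacobian $\mJ_k$, where the $\aaa$ update acts cleanly, and only then pay a single triangle-inequality price of $\norm{\mJ_k-\mJ_*}_F\le Mr_k$ to return to $\mJ_*$. Concretely, write
\[
\sigma_{k+1}=\norm{\mB_{k+1}-\mJ_*}_F\le \norm{\mB_{k+1}-\mJ_k}_F+\norm{\mJ_k-\mJ_*}_F\le \norm{\mB_{k+1}-\mJ_k}_F+Mr_k,
\]
using Assumption~\ref{ass:lisp} for the last step. Everything then reduces to controlling $\norm{\mB_{k+1}-\mJ_k}_F$.

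First I would note that with $\mR_k:=\mB_k-\mJ_k$ the $\aaa$ update gives $\mB_{k+1}-\mJ_k=\mR_k-\frac{\mR_k\vs_k\vs_k^\top\mR_k^\top}{\vs_k^\top\mR_k^\top\mR_k\vs_k}\mR_k$ when $\mR_k\vs_k\ne\bm{0}$, and $\mB_{k+1}-\mJ_k=\mR_k$ otherwise. This is exactly the algebraic form handled in the proof of Theorem~\ref{thm:main2}, so the identical chain of identities (expand the Frobenius norm, then apply Cauchy--Schwarz) yields
\[
\norm{\mB_{k+1}-\mJ_k}_F^2\le \norm{\mR_k}_F^2-\frac{\vs_k^\top(\mR_k^\top\mR_k)\vs_k}{\vs_k^\top\vs_k},
\]
which holds trivially (as equality) when $\mR_k\vs_k=\bm{0}$. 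Next I would lower-bound the subtracted term by $\tfrac1n\norm{\mR_k}_F^2$ in each version: for the greedy choice $\vs_k=\argmax_{\vs\in\{\ve_1,\dots,\ve_n\}}\norm{\mR_k\vs}$ this is deterministic, since $\frac{\vs_k^\top(\mR_k^\top\mR_k)\vs_k}{\vs_k^\top\vs_k}=\max_i\norm{\mR_k\ve_i}^2\ge\frac1n\sum_i\norm{\mR_k\ve_i}^2=\frac1n\norm{\mR_k}_F^2$; for the random choice, diagonalizing $\mR_k^\top\mR_k=\mU\mathbf{\Lambda}\mU^\top$ and using that $\vv:=\mU^\top\vs_k\sim\cN(\bm{0},\mI_n)$ satisfies $\EE[v_i^2/\norm{\vv}^2]=1/n$ by symmetry gives $\EE_{\vs_k}\!\left[\frac{\vs_k^\top(\mR_k^\top\mR_k)\vs_k}{\vs_k^\top\vs_k}\right]=\frac1n\tr(\mR_k^\top\mR_k)=\frac1n\norm{\mR_k}_F^2$. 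Either way, $\EE\!\left[\norm{\mB_{k+1}-\mJ_k}_F^2\right]\le\left(1-\tfrac1n\right)\norm{\mR_k}_F^2$ (the expectation conditional on the first $k$ iterates, and unnecessary in the greedy case).

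Finally I would bound $\norm{\mR_k}_F=\norm{\mB_k-\mJ_k}_F\le\sigma_k+Mr_k$ by the triangle inequality and Assumption~\ref{ass:lisp}, apply Jensen's inequality $\EE[\norm{\mB_{k+1}-\mJ_k}_F]\le\sqrt{\EE[\norm{\mB_{k+1}-\mJ_k}_F^2]}$, and use the elementary estimate $\sqrt{1-1/n}\le 1-\tfrac1{2n}$ (since $(1-\tfrac1{2n})^2=1-\tfrac1n+\tfrac1{4n^2}\ge 1-\tfrac1n$) to get $\EE[\norm{\mB_{k+1}-\mJ_k}_F]\le(1-\tfrac1{2n})(\sigma_k+Mr_k)$. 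Substituting into the first display gives
\[
\EE[\sigma_{k+1}]\le\left(1-\frac{1}{2n}\right)(\sigma_k+Mr_k)+Mr_k=\left(1-\frac{1}{2n}\right)\sigma_k+\left(2-\frac{1}{2n}\right)Mr_k,
\]
which is \eqref{eq:ssig}. The only genuinely delicate point is making the random-version step rigorous: one must allow $\mR_k$ to have rank less than $n$, dismiss the probability-zero events $\vs_k=\bm{0}$ or $\mR_k\vs_k=\bm{0}$, and read the resulting identity as a conditional expectation given the first $k$ steps (then combine with the tower property), since $\sigma_k$, $r_k$, and $\mR_k$ are themselves random; the greedy version is entirely deterministic and poses no such obstacle.
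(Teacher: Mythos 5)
Your proof is correct and follows essentially the same route as the paper's: expand $\norm{\mB_{k+1}-\mJ_k}_F^2$ via the $\aaa$ update (exactly as in Theorem~\ref{thm:main2}), apply Cauchy--Schwarz to get $\norm{\mB_{k+1}-\mJ_k}_F^2\le\norm{\mR_k}_F^2-\frac{\vs_k^\top\mR_k^\top\mR_k\vs_k}{\vs_k^\top\vs_k}$, lower-bound the subtracted Rayleigh quotient by $\tfrac1n\norm{\mR_k}_F^2$ deterministically (greedy) or in expectation (random), then Jensen, $\sqrt{1-1/n}\le 1-\tfrac1{2n}$, and two triangle inequalities through $\mJ_k$ together with Assumption~\ref{ass:lisp}. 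The only cosmetic difference is the order in which you apply the triangle inequality and Assumption~\ref{ass:lisp}; the paper keeps $\norm{\mJ_k-\mJ_*}_F$ symbolic one step longer before bounding it by $Mr_k$, while you substitute immediately, and the paper handles the $\mR_k\vs_k=\bm{0}$ case by deducing $\mR_k=\bm{0}$ whereas you observe the key inequality holds with equality there — both are fine.
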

\begin{proof}
First, we consider the case that $\mR_k \vs_k \neq \bm{0}$. 
By the update rule in Algorithm~\ref{algo:aaa-greedy-random} and the definition of $\mR_k$, we have
\begin{align*}
    \norm{\mB_{k+1} - \mJ_k}_F^2 
    \stackrel{\eqref{eq:aaa}}{=}&
    \norm{\mR_k - 	 \frac{\mR_k \vs_k \vs_k^\top \mR_k^\top}{\vs_k^\top \mR_k^\top \mR_k \vs_k}\mR_k }_F^2\\
    =& 
    \norm{\mR_k}_F^2 + \frac{\tr\left( \mR_k \vs_k \vs_k^\top   (\mR_k^\top \mR_k)^2 \vs_k\vs_k^\top \mR_k^\top \right)}{(\vs_k^\top \mR_k^\top \mR_k \vs_k)^2} 
    - \frac{2\tr\left( \mR_k^\top \mR_k\vs_k\vs_k^\top \mR_k^\top \mR_k \right)}{\vs_k^\top \mR_k^\top \mR_k \vs_k}\\
    =&
    \norm{\mR_k}_F^2 + \frac{\vs_k^\top   (\mR_k^\top \mR_k)^2 \vs_k}{\vs_k^\top \mR_k^\top \mR_k \vs_k} - 2\frac{\vs_k^\top   (\mR_k^\top \mR_k)^2 \vs_k}{\vs_k^\top \mR_k^\top \mR_k \vs_k}\\
    =&
    \norm{\mR_k}_F^2 - \frac{\vs_k^\top   (\mR_k^\top \mR_k)^2 \vs_k}{\vs_k^\top \mR_k^\top \mR_k \vs_k}\\
    \le&
    \norm{\mR_k}_F^2 - \frac{\vs_k^\top   (\mR_k^\top \mR_k) \vs_k}{\vs_k^\top  \vs_k},
\end{align*}
For the greedy method, it holds that $\frac{\vs_k^\top   (\mR_k^\top \mR_k) \vs_k}{\vs_k^\top  \vs_k} \geq \frac{\norm{\mR_k}_F^2}{n}$.
For the random method, it holds that $\EE_{\vs_k} \left[ \frac{\vs_k^\top   (\mR_k^\top \mR_k) \vs_k}{\vs_k^\top  \vs_k} \right] = \frac{\norm{\mR_k}_F^2}{n}$.
Thus, it holds that
\begin{align*}
    &\EE_{\vs_k}\left[ \norm{\mB_{k+1} - \mJ_k}_F^2  \right] \leq \left(1-\frac{1}{n}\right) \norm{\mR_k}_F^2\\
    \stackrel{(a)}{\Rightarrow}&
    \EE_{\vs_k}\left[\norm{\mB_{k+1} - \mJ_k}_F\right] \leq \left(1 - \frac{1}{2n}\right) \norm{\mR_k}_F\\
    \Rightarrow&
    \EE_{\vs_k}\left[\norm{\mB_{k+1} - \mJ_*}_F\right] \leq \left(1 - \frac{1}{2n}\right) \norm{\mB_k - \mJ_*}_F + \left(1+1-\frac{1}{2n}\right)\norm{\mJ_k - \mJ_*}_F\\
    \stackrel{(b)}{\Rightarrow}&
    \EE_{\vs_k}\left[\norm{\mB_{k+1} - \mJ_*}_F\right] \leq \left(1 - \frac{1}{2n}\right) \norm{\mB_k - \mJ_*}_F + \left(2-\frac{1}{2n}\right)Mr_k,
\end{align*}
where $\stackrel{(a)}{\Rightarrow}$ is because of the Jensen's inequality and $\stackrel{(b)}{\Rightarrow}$ is because of Assumption~\ref{ass:lisp}.
	
For the case $\mR_k \vs_k = \bm{0}$, by the update rule of Algorithm~\ref{algo:aaa-greedy-random}, similar to the proof of Theorem~\ref{thm:main2}, we can obtain that $\mR_k = \bm{0}$. 
Thus, we have $\norm{\mB_{k+1} - \mJ_k}_F = \norm{\mB_k - \mJ_k}_F 
= \bm{0}$, which implies that
\begin{align*}
    \norm{ \mB_{k+1} - \mJ_* }_F \leq \norm{\mB_{k+1} - \mJ_k }_F + \norm{\mJ_k - \mJ_*}_F 
    \leq Mr_k.
\end{align*}
Thus, Eq.~\eqref{eq:ssig} also holds for the case $\mR_k \vs_k = \bm{0}$. 

%\dachao{
%Suppose $\mR_k \vs_k = \bm{0}$. 
%For the greedy method, since
%\[ \vs_k = \mathop{\arg\max}_{\vs \in \{\ve_1, \dots, \ve_n\}} \norm{\mR_k\vs} \]
%we get
%\[  0 \leq \norm{\mR_k}_F^2 = \sum_{i=1}^n \norm{\mR_k\ve_i}^2 \leq n \norm{\mR_k\vs_k}^2 = 0   \]
%leading to $\mR_k=\bm{0}$.

%for the random method, if $\mR_k\neq \bm{0}$, then for $\vs \sim \gN(\bm{0}, \mI_n)$, $\sP(\{\vs: \mR_k\vs = \bm{0}\} = 0$, which does not affect the expectation...
%}
\end{proof}

\begin{lemma}\label{lem:rr}
Suppose the initialization satisfies $ncMr_0 \leq 1/24$ and $c(\sigma_0 + 4Mr_0) \leq 1/3$. Then we have the following properties.
\begin{enumerate}
    \item For the greedy version, it holds that
    \begin{equation}
        r_{k+1} \leq \frac{3}{2}\left(1 -\frac{1}{4n}\right)^k c\left(\sigma_0 + 3M r_0\right) \cdot r_k. \label{eq:rr_a}
    \end{equation}
    \item For the random version, given $0<\delta<1$, it holds with a probability at least $1 - \delta$ that for all $k \geq 0$, 
    \begin{equation}
        r_{k+1} \leq 24n^2\delta^{-1}\left(1 - \frac{1}{4n+1}\right)^k c\left(\sigma_0 + 3M r_0\right) \cdot r_k. \label{eq:rr_b}
    \end{equation}
\end{enumerate}
% \begin{enumerate}
%     \item For the greedy version, it holds that
%     \begin{equation}
%         r_{k+1} \leq \left( 2c\left(1 -\frac{1}{4n}\right)^k \left(\sigma_0 + \alpha r_0\right) + 2^{-k} Mcr_0 \right) \cdot r_k. \label{eq:rr_a}
%     \end{equation}
%     \item For the random version, given $0<\delta<1$, it holds with a probability at least $1 - \delta$ that for all $k \geq 0$, 
%     \begin{equation}
%         r_{k+1} \leq \left( 16cn\delta^{-1}\left(1 -\frac{1}{8n}\right)^k \left(\sigma_0 + \alpha r_0\right) + 2^{-k} Mcr_0 \right) \cdot r_k. \label{eq:rr_b}
%     \end{equation}
% \end{enumerate}
\end{lemma}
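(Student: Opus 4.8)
The argument rests on two pieces: the one-step bound~\eqref{eq:rr}, which controls $r_{k+1}/r_k$ by $c(\tfrac32 Mr_k+\sigma_k)$ divided by $1-c(\sigma_k+Mr_k)$, and a geometric-decay estimate for the Jacobian-approximation error $\sigma_k=\norm{\mB_k-\mJ_*}_F$. The plan is: (i) unroll~\eqref{eq:ssig} to show $\sigma_k$ decays at rate $1-\tfrac1{2n}$ up to a fast-decaying term governed by $Mr_k$; (ii) feed this into~\eqref{eq:rr}, using the two initialization hypotheses to keep its denominator bounded below by $\tfrac23$; (iii) for the random version, upgrade the in-expectation decay of $\sigma_k$ to an estimate holding simultaneously for all $k$, via Markov's inequality with a geometrically shrinking allocation of the failure probability.

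\emph{Step 1 (decay of $\sigma_k$).} With $\rho:=1-\tfrac1{2n}$, unrolling~\eqref{eq:ssig} gives
\[
\EE[\sigma_k] \le \rho^k\sigma_0+\Big(2-\tfrac1{2n}\Big)M\sum_{t=0}^{k-1}\rho^{\,k-1-t}r_t .
\]
Using $r_t\le 2^{-t}r_0$ from~\eqref{eq:rr_dec} together with the identity $2^{-(k-1)}\sum_{j=0}^{k-1}(2\rho)^j\le 2\rho^k/(1-\tfrac1n)$, the sum is at most an $\cO(1)$ constant times $\rho^k r_0$, so $\EE[\sigma_k]\le\rho^k(\sigma_0+\cO(Mr_0))$; for the greedy version~\eqref{eq:ssig} holds without the expectation, so this bound becomes deterministic. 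Since $\rho\le 1-\tfrac1{4n}$ and $2^{-k}\le(1-\tfrac1{4n})^k$ for $n\ge1$, both $\sigma_k$ and $Mr_k$ are $\cO(1)\cdot(1-\tfrac1{4n})^k(\sigma_0+Mr_0)$.

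\emph{Step 2 (greedy version).} The $\aaa$ update is ``almost non-expansive'': the displayed computation in the proofs of Theorem~\ref{thm:main2} and of~\eqref{eq:ssig} shows $\norm{\mB_{k+1}-\mJ_k}_F\le\norm{\mB_k-\mJ_k}_F$, so Assumption~\ref{ass:lisp} gives $\sigma_{k+1}\le\sigma_k+2Mr_k$ and hence, summing $\sum_t 2^{-t}<2$, the unconditional bound $\sigma_k+Mr_k<\sigma_0+4Mr_0$. Combined with $c(\sigma_0+4Mr_0)\le\tfrac13$, the denominator of~\eqref{eq:rr} exceeds $\tfrac23$, so $r_{k+1}\le\tfrac32\,c(\tfrac32 Mr_k+\sigma_k)\,r_k$. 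Substituting the geometric bounds of Step~1 and absorbing the lower-order $Mr_k$ terms into the $Mr_0$ coefficient (using $ncMr_0\le\tfrac1{24}$) gives~\eqref{eq:rr_a}.

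\emph{Step 3 (random version).} Here Step~1 only yields $\EE[\sigma_k]\le\rho^k(\sigma_0+\cO(Mr_0))$. Put $\rho':=1-\tfrac1{4n+1}$, so that $\rho<\rho'$, and apply Markov's inequality to each $\sigma_k$ with threshold $t_k:=C(\rho')^k$ and failure probability $\delta_k$; choosing $\delta_k\propto(\rho/\rho')^k$ makes $\sum_k\delta_k$ a convergent geometric series of sum $\Theta(n)$, so a suitable $C=\cO(\mathrm{poly}(n)\,\delta^{-1})(\sigma_0+\cO(Mr_0))$ (the power of $n$ chosen generously to land on the $24n^2\delta^{-1}$ of~\eqref{eq:rr_b}) ensures $\sum_k\delta_k\le\delta$. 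On the good event $\bigcap_k\{\sigma_k\le t_k\}$ we repeat Step~2: the denominator of~\eqref{eq:rr} is still $>\tfrac23$ because that rests on the \emph{unconditional} estimate $\sigma_k+Mr_k<\sigma_0+4Mr_0$, not on the possibly large $t_k$, while the numerator now uses $\sigma_k\le t_k=C(\rho')^k$ and $Mr_k\le Mr_0(\rho')^k$. This yields~\eqref{eq:rr_b}.

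\emph{Main obstacle.} The delicate part is the random case: one must keep two estimates for $\sigma_k$ in play --- an always-valid $\sigma_k\le\sigma_0+4Mr_0$ controlling the denominator, and a high-probability geometric bound controlling the numerator --- and one must tune the per-iteration failure budget $\delta_k$ to decay at exactly the rate gap between $\tfrac1{2n}$ and $\tfrac1{4n+1}$, so that the union bound over all $k$ converges while the thresholds $t_k$ still decay geometrically; this is precisely what degrades the rate to $1-\tfrac1{4n+1}$ and produces the $\mathrm{poly}(n)\delta^{-1}$ prefactor. The remainder --- the geometric-sum estimates and matching constants against the two initialization hypotheses --- is routine bookkeeping.
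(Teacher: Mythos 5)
Your proof is essentially sound and would establish a bound of the same form, but it takes a genuinely different route from the paper's, and the detour costs you the precise constants.

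The paper's argument tracks the \emph{coupled} Lyapunov quantity $\sigma_k + 3Mr_k$: combining Eq.~\eqref{eq:ssig} with the one-step bound~\eqref{eq:rr} on $r_{k+1}$, and using the initialization condition $ncMr_k \leq 1/24$ to make the cross-terms small, one gets the single clean contraction
\[
\EE_{\vs_k}\left[\sigma_{k+1} + 3M r_{k+1}\right] \leq \left(1 - \tfrac{1}{4n}\right)\left(\sigma_k + 3M r_k\right),
\]
which unrolls to $\EE[\sigma_k + 3Mr_k] \leq (1-\tfrac{1}{4n})^k(\sigma_0 + 3Mr_0)$; feeding this into~\eqref{eq:rr} (whose denominator you correctly bound below by $2/3$) reproduces~\eqref{eq:rr_a} with the stated constant. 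Your Step~1 instead unrolls $\sigma_k$ \emph{on its own} against the crude decay $r_t \leq 2^{-t}r_0$ from~\eqref{eq:rr_dec}. That is valid, but the geometric-convolution sum $\sum_t \rho^{k-1-t}2^{-t}$ produces a factor $\frac{2(2-1/(2n))}{1-1/n}\approx 4$--$7$ on $Mr_0$ rather than the paper's $3$; adding the $\tfrac{3}{2}Mr_k$ contribution pushes the effective coefficient higher. So you would obtain $r_{k+1} \leq \tfrac32(1-\tfrac1{4n})^k c(\sigma_0 + C Mr_0)r_k$ with $C$ noticeably larger than $3$, which is the right shape and rate but \emph{not} the Lemma as stated; ``absorbing the lower-order terms into the $Mr_0$ coefficient'' cannot shrink this $C$ back to $3$, so the phrase ``routine bookkeeping'' undersells a genuine, if minor, loss. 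The combined Lyapunov function is precisely what makes the initialization constants close; decoupling $\sigma_k$ and $r_k$ in the unrolling is where the slack enters.

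For the random version the paper simply invokes Lemma~26 of \citet{lin2021faster} (a black box that produces the $16n^2\delta^{-1}$ factor and the degraded rate $1-\tfrac{1}{4n+1}$). Your Step~3 reconstructs that lemma from scratch: applying Markov pointwise with thresholds $t_k = C(\rho')^k$, allocating failure probability $\delta_k \propto (\rho/\rho')^k$, and summing the geometric series. This is correct and is in fact the argument underlying the cited lemma; your bookkeeping even gives a prefactor of order $n\delta^{-1}$, slightly \emph{better} than the paper's $n^2\delta^{-1}$, which you are free to relax to match~\eqref{eq:rr_b}. You also correctly separate the two roles of $\sigma_k$ (an always-valid crude bound $\sigma_k + Mr_k < \sigma_0 + 4Mr_0$ to lower-bound the denominator of~\eqref{eq:rr}, and the high-probability geometric bound for the numerator), which is the real subtlety in the random case and which the paper glosses over by citing the external lemma.

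Two smaller notes. Your derivation of $\sigma_{k+1} \leq \sigma_k + 2Mr_k$ from nonexpansiveness plus two triangle inequalities is looser than the paper's Eq.~\eqref{eq:ss_dec} ($\sigma_{k+1} \leq \sigma_k + Mr_k$), which is proved by taking the Frobenius norm of the identity~\eqref{eq:aaa_up_2} and using that the spectral norm of the rank-one projector is $1$; your version still closes the denominator bound, so this is cosmetic. And your observation that $2^{-k} \leq (1-\tfrac1{4n})^k$ is fine for $n \geq 1$, but it is worth saying explicitly since it is the step that puts the fast $r_k$-decay onto the slower common rate.
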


\begin{proof}
First, by the initialization condition, Eq.~\eqref{eq:rr_dec} always holds. Then it holds that $ncMr_k \leq ncMr_0 \leq 1/24$ for all $k\geq 0$.
Thus, we can obtain that
\begin{align*}
    \EE_{\vs_k}\left[\sigma_{k+1} + 3M r_{k+1}\right] 
    \stackrel{\eqref{eq:ssig}\eqref{eq:rr}}{\leq}&
    \left(1 - \frac{1}{2n}\right) \sigma_k + \left(2-\frac{1}{2n}\right)Mr_k + \frac{3}{2}\left(3cMr_k/2+c\sigma_k \right) 3Mr_k \\
    =&
    \left(1 - \frac{1}{2n} + \frac{9cMr_k}{2}\right)\sigma_k
    + 3Mr_k \left(\frac{2}{3} - \frac{1}{6n}+ \frac{9cMr_k}{4}\right)\\
    \stackrel{(a)}{\leq}&
    \left(1 - \frac{1}{2n} + \frac{9}{48n}\right)\sigma_k + \left(1-\frac{1}{3} - \frac{1}{6n} + \frac{9}{96n}\right)3M r_k \\
    \leq& \left(1 - \frac{1}{4n}\right)\left(\sigma_k +3Mr_k\right),
\end{align*}
where $(a)$ follows from $ncMr_k \leq 1/24$.
Then,
\begin{align}
    \EE\sigma_k \leq \EE\left[\sigma_k+3Mr_k\right] \leq \left(1 -\frac{1}{4n}\right)^k \left(\sigma_0 + 3M r_0\right). \label{eq:sig}
\end{align}

First, for the greedy method, because there is no randomness in the greedy method, Eq.~\eqref{eq:sig} holds without expectation. Combining with Eq.~\eqref{eq:rr_dec} and Eq.~\eqref{eq:rr}, we can obtain that
\begin{align*}
    r_{k+1} 
    \stackrel{\eqref{eq:rr}}{\leq} 
    \frac{3}{2}\left(3cMr_k/2+c\sigma_k \right) r_k
    \leq \frac{3}{2}\cdot c\left(3Mr_k+\sigma_k \right) r_k
    \leq \frac{3}{2}\left(1 -\frac{1}{4n}\right)^k c\left(\sigma_0 + 3M r_0\right) \cdot r_k.
\end{align*}
	
For the random case, using Eq.~\eqref{eq:sig} with \cite[Lemma 26]{lin2021faster}, we get with the probability at least $1 -\delta$,
\begin{equation*}
    \sigma_k + 3M r_k \leq 16n^2\delta^{-1}\left(\sigma_0 + 3M r_0\right)\left(1 - \frac{1}{4n+1}\right)^k, \forall k \geq 0.
\end{equation*}
Thus, we obtain
\begin{align*}
    r_{k+1} 
    \stackrel{\eqref{eq:rr}}{\leq} 
    \frac{3}{2}\left(3cMr_k/2+c\sigma_k \right) r_k
    \leq \frac{3}{2} \cdot c\left(3Mr_k+\sigma_k \right) r_k
    \leq 24n^2\delta^{-1}\left(1 - \frac{1}{4n+1}\right)^k c\left(\sigma_0 + 3M r_0\right)\cdot r_k.
\end{align*}
\end{proof}

\begin{lemma}
The update of Algorithm~\ref{algo:aaa-greedy-random} has the following property:
\begin{align}
    \sigma_{k+1} \leq& \sigma_k + Mr_k. \label{eq:ss_dec}
\end{align}
Furthermore, we have the following properties:
\begin{enumerate}
    \item {\bf For the greedy method}, given $k\geq 0$, if there exist $i, j \in\{0, \dots, n-1\}$ with $i<j$ such that $\vs_{k+i} = \vs_{k+j}$, then
    \begin{equation}
    \sigma_{k+j} \leq \sqrt{n}\sum_{t=k+i}^{k+j} M r_t + 2\sqrt{n}M r_{k+j}, \label{eq:sig_rs}
    \end{equation}
    otherwise, the following inequality holds:
    \begin{equation}
    \sigma_{k+n} \leq \sqrt{n}  \sum_{t=k}^{k+n-1} Mr_t.\label{eq:sig_rs_1}
    \end{equation} 
    \item {\bf For the random method}, given $k\ge 0$, then it holds that with a probability at least $1-\delta$, 
    \begin{equation}
        \sigma_{k+n} \leq 4c_1\delta^{-1}\left(n + 4\log(2n/\delta)\right) \sum_{t=k}^{k+n-1} Mr_t, \text{ if } n \geq n_0 := \log_{c_2}(\delta/4), \label{eq:sig_rs_2}
    \end{equation}
    where $c_1>1, c_2 \in (0, 1)$ are all constants following Lemma \ref{lem:gauss_inv}.
\end{enumerate}
\end{lemma}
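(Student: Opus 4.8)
The plan is to reduce everything to one algebraic fact: the \algname\ update is a two-sided rank-one projection, $\aaa(\mB,\mJ,\vs)=(\mI-\mathbf{P})\mB+\mathbf{P}\mJ$, where $\mathbf{P}$ is the orthogonal projection onto $\mathrm{span}\big((\mJ-\mB)\vs\big)$ (and $\mathbf{P}=\bm{0}$ when $(\mJ-\mB)\vs=\bm{0}$). Writing $\mathbf{P}_k$ for this projection at iteration $k$, this yields $\mB_{k+1}-\mJ_*=(\mI-\mathbf{P}_k)(\mB_k-\mJ_*)+\mathbf{P}_k(\mJ_k-\mJ_*)$; since $\mI-\mathbf{P}_k$ and $\mathbf{P}_k$ are non-expansive, the triangle inequality and Assumption~\ref{ass:lisp} give \eqref{eq:ss_dec} at once: $\sigma_{k+1}\le\sigma_k+\norm{\mJ_k-\mJ_*}_F\le\sigma_k+Mr_k$. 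Restricting the same identity to a single column $\ve_\ell$ yields the per-column recursion $\norm{(\mB_{u+1}-\mJ_*)\ve_\ell}\le\norm{(\mB_u-\mJ_*)\ve_\ell}+Mr_u$ for every $u$, while \eqref{eq:null} supplies the ``reset'' identity $(\mB_{u+1}-\mJ_*)\vs_u=(\mJ_u-\mJ_*)\vs_u$ at the step where $\vs_u$ is consumed. (If $(\mJ_u-\mB_u)\vs_u=\bm{0}$ ever occurs inside a window it forces $\mB_u=\mJ_u$ for both versions, exactly as in the proof of Theorem~\ref{thm:main2}, and the claimed bounds then follow trivially from \eqref{eq:ss_dec} and $\sigma_u\le Mr_u$; so I may assume all updates in the window are nondegenerate.)

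For the greedy version the $\vs_t$ are coordinate vectors, and I split on whether $\vs_k,\dots,\vs_{k+n-1}$ are pairwise distinct. If they are, they permute $\ve_1,\dots,\ve_n$: letting $\tau_\ell$ be the index with $\vs_{\tau_\ell}=\ve_\ell$, I start from the reset $\norm{(\mB_{\tau_\ell+1}-\mJ_*)\ve_\ell}=\norm{(\mJ_{\tau_\ell}-\mJ_*)\ve_\ell}\le Mr_{\tau_\ell}$ and iterate the per-column recursion up to step $k+n$, obtaining $\norm{(\mB_{k+n}-\mJ_*)\ve_\ell}\le\sum_{u=k}^{k+n-1}Mr_u$ for every $\ell$; bounding the Frobenius norm by $\sqrt n$ times the largest column norm gives \eqref{eq:sig_rs_1}. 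If instead $\vs_{k+i}=\vs_{k+j}=:\ve_m$ with $i<j$, the per-column recursion from the reset at step $k+i$ gives $\norm{(\mB_{k+j}-\mJ_*)\ve_m}\le\sum_{u=k+i}^{k+j-1}Mr_u$; since greedy picks $\ve_m$ at step $k+j$, the column $\ve_m$ maximizes $\norm{(\mB_{k+j}-\mJ_{k+j})\ve_\ell}$ over $\ell$, so $\norm{\mB_{k+j}-\mJ_{k+j}}_F\le\sqrt n\,\norm{(\mB_{k+j}-\mJ_{k+j})\ve_m}$, and converting back to $\mJ_*$ via $\norm{\mJ_{k+j}-\mJ_*}_F\le Mr_{k+j}$ gives \eqref{eq:sig_rs}.

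For the random version I run the same scheme with the coordinate basis replaced by the (almost surely invertible) Gaussian matrix $\mS:=[\vs_k,\dots,\vs_{k+n-1}]$. The per-direction recursion, started from the reset $(\mB_{t+1}-\mJ_*)\vs_t=(\mJ_t-\mJ_*)\vs_t$, gives $\norm{(\mB_{k+n}-\mJ_*)\vs_t}\le\norm{\vs_t}\sum_{u=k}^{k+n-1}Mr_u$ for each $t$. Expanding each $\ve_\ell$ in the basis $\{\vs_t\}$ through $\mS^{-1}$ and applying Cauchy--Schwarz column-wise then bounds $\sigma_{k+n}$ by $\big(\sum_{u=k}^{k+n-1}Mr_u\big)$ times a factor depending only on $\mS^{-1}$ and the column norms $\norm{\vs_k},\dots,\norm{\vs_{k+n-1}}$. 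Lemma~\ref{lem:gauss_inv} bounds that factor by $4c_1\delta^{-1}(n+4\log(2n/\delta))$ with probability at least $1-\delta$ once $n\ge n_0=\log_{c_2}(\delta/4)$, which yields \eqref{eq:sig_rs_2}.

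The step I expect to be the crux is controlling how far a column of $\mB$ drifts between the iteration at which its direction is spent and step $k+n$: tracked through $\mB_u$ itself, the triangle inequality amplifies the error by a factor growing geometrically in $n$ and the whole approach collapses. The one nonobvious move is to track the drift through the \emph{error matrix} $\mB_u-\mJ_*$, so that non-expansiveness of $\mI-\mathbf{P}_u$ turns the recursion into the purely additive $\norm{(\mB_{u+1}-\mJ_*)\ve_\ell}\le\norm{(\mB_u-\mJ_*)\ve_\ell}+Mr_u$, making the drift accumulated over a window of length at most $n$ just $\sum Mr_u$. Apart from the Gaussian conditioning bound of Lemma~\ref{lem:gauss_inv} needed in the random case, the argument is entirely deterministic.
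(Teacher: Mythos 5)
Your proposal is correct and takes essentially the same route as the paper's own proof: the paper also writes the update in the projection form $\mB_{k+1}-\mJ_*=(\mI-\mathbf{P}_k)(\mB_k-\mJ_*)+\mathbf{P}_k(\mJ_k-\mJ_*)$ (their Eq.~\eqref{eq:aaa_up_2}), uses non-expansiveness of $\mathbf{P}_k$, $\mI-\mathbf{P}_k$ plus the reset identity \eqref{eq:null} to get the additive per-direction recursion $\norm{(\mB_{k+1}-\mJ_*)\vs_i}\le\sum_{t=i}^{k}Mr_t\norm{\vs_i}$ (their Eq.~\eqref{eq:ss}), and then handles greedy (distinct vs.\ repeated coordinate) and random (via $\norm{\mS_k^{-1}}$, Lemma~\ref{lem:gauss_inv}, and Chi-square concentration) exactly as you do. The one small caveat is in the random case: expanding each $\ve_\ell$ in the basis $\{\vs_t\}$ and applying Cauchy--Schwarz yields $\sigma_{k+n}\le\norm{\mS_k^{-1}}_F\,\norm{(\mB_{k+n}-\mJ_*)\mS_k}_F$, which costs an extra $\sqrt n$ compared to the paper's submultiplicativity bound $\norm{\mA}_F\le\norm{\mS_k^{-1}}\,\norm{\mA\mS_k}_F$ with the spectral norm of $\mS_k^{-1}$ (the quantity Lemma~\ref{lem:gauss_inv} directly controls); this affects only the polynomial-in-$n$ prefactor, not the structure of the argument.
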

\begin{proof}
By the update rule in Eq.~\eqref{eq:aaa}, we can obtain that
\begin{equation}
\mB_{k+1} - \mJ_* = \left(\mI_n - \frac{\mR_k\vs_k\vs_k^\top \mR_k^\top}{\vs_k^\top \mR_k^\top\mR_k\vs_k}\right) 
\left(\mB_k - \mJ_*\right) 
+ \frac{\mR_k\vs_k\vs_k^\top \mR_k^\top}{\vs_k^\top \mR_k^\top\mR_k\vs_k} (\mJ_k - \mJ_*). \label{eq:aaa_up_2}
\end{equation}
Taking the spectral norm on both sides of the above equation, we can obtain that
\begin{align*}
\sigma_{k+1} = \norm{\mB_{k+1} - \mJ_*}_F 
\leq& 
\norm{ \mI_n - \frac{\mR_k\vs_k\vs_k^\top \mR_k^\top}{\vs_k^\top \mR_k^\top\mR_k\vs_k} } \cdot \norm{\mB_k -\mJ_*}_F 
+ \norm{ \frac{\mR_k\vs_k\vs_k^\top \mR_k^\top}{\vs_k^\top \mR_k^\top\mR_k\vs_k} } \cdot \norm{\mJ_k - \mJ_*}_F\\
\leq&
\norm{\mB_k -\mJ_*}_F + \norm{\mJ_k - \mJ_*}_F
\leq
\sigma_k + M r_k.
\end{align*}
Similarly, for any vector $\vs\in\RR^n$, it holds that
\begin{equation*}
\begin{aligned}
    \norm{(\mB_{k+1} - \mJ_*)\vs} 
    \stackrel{\eqref{eq:aaa_up_2}}{=}&
    \norm{\left(\mI_n - \frac{\mR_k\vs_k\vs_k^\top \mR_k^\top}{\vs_k^\top \mR_k^\top\mR_k\vs_k}\right) 
        \left(\mB_k - \mJ_*\right)\vs 
        + \frac{\mR_k\vs_k\vs_k^\top \mR_k^\top}{\vs_k^\top \mR_k^\top\mR_k\vs_k} (\mJ_k - \mJ_*)\vs  }\\
    \leq&
    \norm{ \left(\mB_k - \mJ_*\right)\vs } + \norm{(\mJ_k - \mJ_*)\vs}.
\end{aligned}
\end{equation*}
Repeating the above inequality from the step $i+1$ to $k$ with $i \leq k-1$ and noting that $ (\mB_{i+1} - \mJ_*)\vs_i 
\stackrel{\eqref{eq:null}}{=} (\mJ_i - \mJ_*) \vs_i $ for any $i \geq 0$, we could obtain
\begin{align}
\norm{(\mB_{k+1} - \mJ_*)\vs_i}  
&\leq \norm{(\mB_{i+1} - \mJ_*)\vs_i} + 
\sum_{t=i+1}^{k} \norm{(\mJ_t - \mJ_*)\vs_i} = \norm{(\mJ_{i} - \mJ_*) \vs_i} + \sum_{t=i+1}^{k} \norm{(\mJ_t - \mJ_*)\vs_i} \nonumber \\
&= 
\sum_{t=i}^{k} \norm{(\mJ_t - \mJ_*)\vs_i}
\leq
\sum_{t=i}^{k} Mr_t \norm{\vs_i}. \label{eq:ss}
\end{align}
The above inequality also holds for $i=k$ since $ \norm{(\mB_{k+1} - \mJ_*)\vs_k} 
\stackrel{\eqref{eq:null}}{=} \norm{(\mJ_k - \mJ_*) \vs_k } \leq Mr_k\norm{\vs_k}$.

{\bf For the greedy method}, if there exist $i, j \in\{0, \dots, n-1\}$ with $i<j$ such that $\vs_{k+i} = \vs_{k+j}$, it holds that
\begin{align*}
\sigma_{k+j} 
=& 
\norm{ \mB_{k+j} -\mJ_* }_F
\leq
\sqrt{n}\max_{\ve\in\{\ve_1,\dots,\ve_n\}} \norm{(\mB_{k+j} - \mJ_*)\ve}\\
\le&
\sqrt{n}\max_{\ve\in\{\ve_1,\dots,\ve_n\}} \norm{(\mB_{k+j} - \mJ_{k+j})\ve} + \sqrt{n}\norm{ \mJ_{k+j} - \mJ_* }\\
\stackrel{(a)}{=}&
\sqrt{n} \norm{(\mB_{k+j} - \mJ_{k+j})\vs_{k+j}} + \sqrt{n}\norm{ \mJ_{k+j} - \mJ_*}\\
\stackrel{(b)}{=}&
\sqrt{n} \norm{(\mB_{k+j} - \mJ_{k+j})\vs_{k+i}} + \sqrt{n}\norm{ \mJ_{k+j} - \mJ_* } \\
\leq&\sqrt{n} \norm{(\mB_{k+j} - \mJ_*)\vs_{k+i}} + 2\sqrt{n}\norm{ \mJ_{k+j} - \mJ_* }
\stackrel{\eqref{eq:ss}}{\leq}
\sqrt{n} \sum_{t=k+i}^{k+j-1} Mr_t + 2\sqrt{n} M r_{k+j},
\end{align*}
where $\stackrel{(a)}{=}$ is because of the greedy method and $\vs_{k+i}$ is the coordinate vector which maximizes $\norm{(\mB_{k+i} - \mJ_{k+i})\ve}$ and $\stackrel{(b)}{=}$ is because of $\vs_{k+i} = \vs_{k}$ by assumption.

Otherwise, for any $ 0 \leq i < j \leq n-1, \vs_{k+i} \neq \vs_{k+j}$. Since $\vs_{k},\dots, \vs_{k+n-1} \in \{\ve_1, \dots, \ve_n\}$, we could obtain that $\{\vs_{k}, \dots, \vs_{k+n-1}\} = \{\ve_1, \dots, \ve_n\}$.
Hence, we derive that 
\begin{equation*}
\begin{aligned}
    \sigma_{k+n} &= \norm{(\mB_{k+n} - \mJ_*)}_F = \sqrt{\sum_{i=0}^n \norm{(\mB_{k+n} - \mJ_*) \vs_{k+i}}^2} \leq \sqrt{n} \max_{0\leq i \leq n-1}  \norm{(\mB_{k+n} - \mJ_*) \vs_{k+i}} \\ &\stackrel{\eqref{eq:ss}}{\leq} \sqrt{n} \max_{0 \leq i \leq n-1}\sum_{t=k+i}^{k+n-1} M r_t\norm{\vs_{k+i}} \leq \sqrt{n} \sum_{t=k}^{k+n-1} M r_t.
\end{aligned}
\end{equation*}
Thus, we can obtain Eq.~\eqref{eq:sig_rs_1}.

{\bf For the random method}, denoting $\mS_k = [\vs_k,\dots, \vs_{k+n-1}]$, it holds that 
\begin{equation*}
\begin{aligned}
    \norm{\mB_{k+n} -\mJ_*}_F 
    \leq&
    \norm{\mS_k^{-1}} \cdot  \norm{ \left(\mB_{k+n} -\mJ_*\right) \mS_k }_F 
    =
    \norm{\mS_k^{-1}} \cdot \sqrt{ \sum_{i=k}^{k+n-1} \norm{ \left(\mB_{k+n} -\mJ_*\right) \vs_i }^2 }\\
    \stackrel{\eqref{eq:ss}}{\leq}&
    \norm{\mS_k^{-1}}  \sqrt{ \sum_{i=k}^{k+n-1} \left(\sum_{t=i}^{k+n-1} Mr_i \norm{\vs_i} \right)^2} \leq \norm{\mS_k^{-1}} \sum_{t=k}^{k+n-1} Mr_t \cdot \max_{0 \leq i \leq n-1}{\norm{\vs_{i+k}}}.
\end{aligned}
\end{equation*}
Based on the standard Chi-square concentration inequality \cite[Eq.~(2.18)]{wainwright2019high}, we get
\begin{equation}
    \sP(\norm{\vs_k}^2 \geq (t+1)n) \leq e^{-nt/8}.
\end{equation}
Hence, we get
\begin{equation}
    \sP\left(\exists 0 \leq i \leq n-1, \norm{\vs_i}^2\geq 8\log (2n/\delta) + n\right) \leq n \cdot \frac{\delta}{2n} = \frac{\delta}{2}.
\end{equation}
Moreover, from Lemma \ref{lem:gauss_inv}, we further have that
\begin{equation}
    \sP\left(\frac{1}{\norm{\mS_k^{-1}}} \leq \frac{\delta}{4c_1 \sqrt{n}}\right) \leq \delta/2, \text{if } n \geq n_0 := \log_{c_2}(\delta/4),
\end{equation}
where $c_1>1, c_2 \in (0, 1)$ are all constants in our setting following Lemma \ref{lem:gauss_inv}.

Combining the above inequalities, we could obtain that if $n \geq n_0$, then with a probability at least $1-\delta$, 
\[ \norm{\mB_{k+n} -\mJ_*}_F \leq \frac{4c_1\sqrt{n}}{\delta} \cdot \sqrt{8\log(2n/\delta)+n} \sum_{t=k}^{k+n-1} Mr_t \leq 4c_1\delta^{-1}\left(n + 4\log(2n/\delta)\right) \cdot  \sum_{t=k}^{k+n-1} Mr_t. \]
\end{proof}

Combining the above lemmas, we can obtain the final convergence rate of our algorithm.

\begin{thm}\label{thm:main}
Supposing the initialization of Algorithm~\ref{algo:aaa-greedy-random} satisfies the one in Lemma~\ref{lem:rr}, then the algorithm has the following convergence properties:
\begin{enumerate}
\item {\bf For the greedy method}, given $k\geq 0$, if there exist $i, j \in\{0, \dots, n-1\}$ with $i<j$ such that $\vs_{k+i} = \vs_{k+j}$, then
\begin{equation}
    r_{k+n+1} \leq r_{k+j+1} \leq 6c\sqrt{n} M \cdot \left(\prod_{t=k+i}^{k+j-1} \cdot \rho_t\right) r^2_{k+i} \leq 6c\sqrt{n} M \cdot \left(\prod_{t=k+i}^{k+j-1} \cdot \rho_t\right) r^2_{k}, \label{eq:main1}
\end{equation}
with $\rho_t$ defined as
\begin{equation} \label{eq:rho}
\rho_t = \frac{3}{2}\left(1 -\frac{1}{4n}\right)^t c\left(\sigma_0 + 3M r_0\right).
\end{equation}
Otherwise, it holds that
\begin{equation}\label{eq:main2}
    r_{k+n+1} \leq 3c\sqrt{n}M \left(\prod_{t=k}^{k+n-1} \rho_t\right) r_k^2.
\end{equation}
\item {\bf For the random method}, given $k\geq 0$, and $0<\delta<1$, then it holds that with a probability at least $1-2\delta$
\begin{equation}\label{eq:rand}
r_{k+n+1} \leq 12cc_1\delta^{-1}\left(n + 4\log(2n/\delta)\right)M \left(\prod_{t=k}^{k+n-1} \rho'_t\right) r_k^2, \text{ if } n \geq n_0 := \log_{c_2}(\delta/4),
\end{equation}
with $\rho_t'$ defined as
\begin{equation}
    \rho_t' = \min\left\{24n^2\delta^{-1}\left(1 - \frac{1}{4n+1}\right)^k c\left(\sigma_0 + 3M r_0\right), \frac{1}{2}\right\}.
\end{equation}
\end{enumerate}
\end{thm}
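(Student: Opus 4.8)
The statement is an assembly of three ingredients already in hand: the one-step contraction of $r_k$ in Eq.~\eqref{eq:rr}, the decay of the Jacobian error $\sigma_k$ accumulated along a full window of $n$ directions (Eqs.~\eqref{eq:sig_rs}, \eqref{eq:sig_rs_1}, \eqref{eq:sig_rs_2}), and the refined per-step bound on $r_{k+1}/r_k$ from Lemma~\ref{lem:rr}. Before combining them I would record two facts that hold under the initialization assumed in Lemma~\ref{lem:rr}. First, by Eq.~\eqref{eq:rr_dec} the sequence $\{r_k\}$ decays at least geometrically with ratio $1/2$, so for every $k$ and $m\ge 0$ one has $\sum_{t=k}^{k+m} r_t < 2 r_k$ and $r_{k+m}\le \big(\prod_{t=k}^{k+m-1}\rho_t\big) r_k$, where one checks directly that $\rho_t\le 1/2$ (and likewise $\rho_t'\le 1/2$ in the random case, by the $\min$ in its definition). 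Second, combining Eq.~\eqref{eq:ss_dec} with this geometric decay gives $\sigma_t\le \sigma_0+M\sum_{s<t}r_s\le \sigma_0+2Mr_0$, hence $c(\sigma_t+Mr_t)\le c(\sigma_0+3Mr_0)\le 1/3<1$ for all $t$; so Eq.~\eqref{eq:rr} is always applicable, and with denominator at least $2/3$ it simplifies to the clean recursion $r_{t+1}\le \tfrac32 c\big(\sigma_t+\tfrac32 Mr_t\big) r_t$.

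\textbf{Greedy case.} Here I would split exactly as the $\sigma$-window lemma does. If there are $i<j$ in $\{0,\dots,n-1\}$ with $\vs_{k+i}=\vs_{k+j}$, I apply Eq.~\eqref{eq:sig_rs} together with the geometric-sum bound to get $\sigma_{k+j}\le C\sqrt n\,M r_{k+i}$ for an absolute constant $C$, then apply the simplified Eq.~\eqref{eq:rr} at step $k+j$: $r_{k+j+1}\le \tfrac32 c(\sigma_{k+j}+\tfrac32 Mr_{k+j}) r_{k+j}$. Substituting $r_{k+j}\le \big(\prod_{t=k+i}^{k+j-1}\rho_t\big)r_{k+i}$, absorbing the subordinate $Mr_{k+j}$ terms into the $\sqrt n$ factor, and using $r_{k+i}\le r_k$ together with $r_{k+n+1}\le r_{k+j+1}$ (monotonicity, since $j+1\le n$) yields Eq.~\eqref{eq:main1}. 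If instead all of $\vs_{k},\dots,\vs_{k+n-1}$ are distinct, they form a permutation of $\{\ve_1,\dots,\ve_n\}$, so Eq.~\eqref{eq:sig_rs_1} gives $\sigma_{k+n}\le \sqrt n\,M\sum_{t=k}^{k+n-1}r_t< 2\sqrt n\,M r_k$; one further application of the simplified Eq.~\eqref{eq:rr} at step $k+n$, with $r_{k+n}\le \big(\prod_{t=k}^{k+n-1}\rho_t\big)r_k$, gives Eq.~\eqref{eq:main2}.

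\textbf{Random case.} I would work on the intersection of two events, each of probability at least $1-\delta$: the event from part 2 of Lemma~\ref{lem:rr} that $r_{t+1}\le \rho_t' r_t$ for all $t\ge 0$, and (for $n\ge n_0$) the event of Eq.~\eqref{eq:sig_rs_2} that $\sigma_{k+n}\le 4c_1\delta^{-1}(n+4\log(2n/\delta))\sum_{t=k}^{k+n-1}Mr_t$; a union bound gives probability at least $1-2\delta$. On this event the two recorded facts still hold (crucially, the geometric decay is sourced from the deterministic Eq.~\eqref{eq:rr_dec}, not from part 2 of Lemma~\ref{lem:rr}, whose prefactor can exceed $1$ for small $k$), so $\sum_{t=k}^{k+n-1}r_t<2r_k$ and hence $\sigma_{k+n}\le 8c_1\delta^{-1}(n+4\log(2n/\delta))M r_k$. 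Applying the simplified Eq.~\eqref{eq:rr} at step $k+n$ and substituting $r_{k+n}\le \big(\prod_{t=k}^{k+n-1}\rho_t'\big)r_k$ produces Eq.~\eqref{eq:rand}. Finally, since $\prod_{t=k}^{k+n-1}\rho_t=\big(\tfrac32 c(\sigma_0+3Mr_0)\big)^n(1-\tfrac1{4n})^{nk+n(n-1)/2}\to 0$ as $k\to\infty$ (and analogously for $\rho_t'$), the ratio $r_{k+n+1}/r_k^2\to 0$, which certifies the $n$-step super quadratic rate of Definition~\ref{def:qq}.

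\textbf{Main obstacle.} The routine parts are the geometric summations and the constant chasing when absorbing the lower-order $Mr_t$ terms. The step needing genuine care is licensing the application of Eq.~\eqref{eq:rr} at the \emph{late} index $k+n$ (or $k+j$): this requires the uniform-in-$t$ bound $c(\sigma_t+Mr_t)<1$, which I obtain only after verifying that the deterministic decay Eq.~\eqref{eq:rr_dec} propagates the initialization conditions of Lemma~\ref{lem:rr} forward and keeps $\sigma_t$ bounded through Eq.~\eqref{eq:ss_dec}. In the random case there is the extra subtlety already flagged --- the high-probability per-step factor from part 2 of Lemma~\ref{lem:rr} exceeds $1$ for small $k$, so the geometric-sum and product bounds must come from Eq.~\eqref{eq:rr_dec}, which is exactly why $\rho_t'$ is defined with the $\min\{\cdot,\tfrac12\}$. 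The polynomial-in-$n$ and $\delta^{-1}$ blow-ups, together with the threshold $n_0$, are inherited verbatim from the Gaussian smallest-singular-value estimate (Lemma~\ref{lem:gauss_inv}) and the $\chi^2$ tail used in the $\sigma$-window lemma, and are not re-derived here.
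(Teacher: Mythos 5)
Your proposal is correct and follows essentially the same route as the paper's proof: apply Eq.~\eqref{eq:rr} at the late index $k+j$ (or $k+n$), bound $\sigma$ there via the $\sigma$-window lemmas, control the accumulated $\sum r_t$ by the deterministic halving of Eq.~\eqref{eq:rr_dec}, and then feed in the per-step decay $r_{t+1}\le\rho_t r_t$ (resp. $\rho'_t r_t$) from Lemma~\ref{lem:rr}, with a union bound over the two high-probability events in the random case. Your explicit check that $c(\sigma_t+Mr_t)\le 1/3$ propagates forward (so Eq.~\eqref{eq:rr} stays applicable) and your remark that $\rho_t\le 1/2$ and that the geometric sums must be sourced from Eq.~\eqref{eq:rr_dec} rather than the (possibly $>1$) prefactor of Eq.~\eqref{eq:rr_b} are exactly the subtleties the paper handles implicitly, so the argument is sound.
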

\begin{proof}
{\bf For the greedy method},
by Eq.~\eqref{eq:rr_dec} and Eq.~\eqref{eq:rr_a}, it holds that for any $i > 0$, 
\begin{equation}
r_{k+i} \leq \prod_{t=k}^{k+i-1}\rho_t \cdot r_{k}. \label{eq:rr_f}
\end{equation}
if there exists $i, j \in\{0, \dots, n-1\}$ with $i<j$ such that $\vs_{k+i} = \vs_{k+j}$, then
by Eq.~\eqref{eq:rr_dec}, we can get $r_{k+j} \leq r_{k+i}\cdot 2^{i-j}$, leading to 
\begin{align*}
r_{k+j+1} 
\stackrel{\eqref{eq:rr}}{\leq}&
1.5c(\sigma_{k+j} + 1.5 Mr_{k+j})r_{k+j}
\stackrel{\eqref{eq:sig_rs}}{\leq}
1.5 c\left(\sqrt{n}\sum_{t=k+i}^{k+j-1} Mr_t + (2\sqrt{n}+1.5) Mr_{k+j} \right) r_{k+j}\\
\leq&
1.5c\left(\sqrt{n}\sum_{t=0}^{j-i-1} 2^{-t} + (2\sqrt{n}+1.5) 2^{i-j} \right) Mr_{k+i}r_{k+j} \leq 1.5c\left(2\sqrt{n}+1.5\right) Mr_{k+i}r_{k+j} 
\\
\leq& 6c\sqrt{n} M r_{k+i}r_{k+j}
\stackrel{\eqref{eq:rr_f}}{\leq}
6c\sqrt{n} M \left(\prod_{t=k+i}^{k+j-1}\rho_t\right) \cdot r_{k+i}^2.
\end{align*}
Thus we obtain the result in Eq.~\eqref{eq:main1} since $r_{k+n+1} \leq r_{k+j+1}$ and $r_{k+i} \leq r_k$ by Eq.~\eqref{eq:rr_dec}.

Otherwise, we can prove Eq.~\eqref{eq:main2} through
\begin{align*}
r_{k+n+1} 
\stackrel{\eqref{eq:rr}}{\leq}&
1.5c(\sigma_{k+n} + 1.5Mr_{k+n})r_{k+n}
\stackrel{\eqref{eq:sig_rs_1}}{\leq}
1.5c\left(\sqrt{n}\sum_{t=k}^{k+n-1} Mr_t + 1.5Mr_{k+n}\right) r_{k+n}\\
\leq&
1.5c\left(\sqrt{n}\sum_{t=0}^{n-1} 2^{-t} + 1.5\cdot 2^{-n}\right) Mr_{k} r_{k+n} \leq 3c\sqrt{n} M r_{k}r_{k+n}
\stackrel{\eqref{eq:rr_f}}{\leq}
3c\sqrt{n} M \left(\prod_{t=k}^{k+n-1}\rho_t\right) \cdot r_{k}^2.
\end{align*}

{\bf For the  random method}, 
by Eq.~\eqref{eq:rr_dec}, Eq.~\eqref{eq:rr_b} and definition of $\rho_t'$, it holds that with a probability at least $1-\delta$ 
\begin{equation}
r_{k+n} \leq \prod_{t=k}^{k+n-1}\rho'_t \cdot r_{k}. \label{eq:rr_f1}
\end{equation}
Denoting $\theta = 4c_1\delta^{-1}\left(n + 4\log(2n/\delta)\right) > 1$, we can obtain that with a probability at least $1-2\delta$,
\begin{align*}
r_{k+n+1} 	
\stackrel{\eqref{eq:rr}}{\leq}&
1.5c(\sigma_{k+n} + 1.5Mr_{k+n})r_{k+n}
\stackrel{\eqref{eq:sig_rs_2}}{\leq}
1.5c\left( \theta \sum_{t=k}^{k+n-1} Mr_t + 1.5Mr_{k+n} \right) r_{k+n}\\
\leq& 1.5c\left(\theta \sum_{t=0}^{n-1} 2^{-t} + 1.5\cdot 2^{-n}\right) Mr_{k} r_{k+n}
\leq 
3c\theta M r_k r_{k+n}
\stackrel{\eqref{eq:rr_f1}}{\leq}
3c\theta M\left(\prod_{t=k}^{k+n-1} \rho'_t\right) r_k^2.
\end{align*}
Therefore, Eq.~\eqref{eq:rand} holds.
\end{proof}

\begin{remark}
Theorem~\ref{thm:main} shows that our algorithm with the greedy or random method can achieve a $n$-step super quadratic convergence for every $n$ steps.
Using Eq.~\eqref{eq:main2} as an example, it holds that 
\begin{align*}
\lim_{k\to\infty} \frac{r_{k+n+1}}{r_k^2} 
= 
\lim_{k\to\infty} 3c\sqrt{n}M \left(\prod_{t=k}^{k+n-1}\rho_t\right)
\stackrel{\eqref{eq:rho}}{\leq}
\lim_{k\to\infty} 3c\sqrt{n}M \left(\frac{3}{2}\left(1 -\frac{1}{4n}\right)^k c\left(\sigma_0 + 3M r_0\right)\right)^{n}
=0.
\end{align*}
Comparing the greedy method with the random method, we can observe that the greedy method may outperform the random one since there may be $i, j \in\{0, \dots, n-1\}$ with $i<j$ such that $\vs_{k+i} = \vs_{k+j}$.
In this case, the super quadratic convergence happens in $i$ steps which are less than $n$ steps.
\end{remark}

\begin{remark}
	Remark~\ref{rmk:quad} discusses that with the restart strategy, the conjugate gradient method and AA-I can achieve $n$-step quadratic convergence rates.
	In contrast, our algorithm does \emph{not} require a restart.
	Furthermore, Theorem~\ref{thm:main} shows that given any $k$, in the following $n+1$ steps, there must happen a \emph{super} quadratic convergence in the update of our algorithm. 
	This property does \emph{not} hold for the  the conjugate gradient method  with a restart strategy.
\end{remark}

\begin{remark}
Although we only prove our methods in well-defined Jacobian cases, that is, Assumption~\ref{ass:lisp} holds, we believe our method could also apply to generalized Jacobian cases as well, such as the famous semi-smooth settings with the generalized Hessian matrix \cite{xiao2018regularized,hu2022local}.
Noting that the Jacobian is well-defined in most problems in the neighborhood of the solution, we could apply our method with the generalized Jacobian to solve some ill-defined problems starting from proper initialization.
\end{remark}

\section{Numerical Experiments}

In Section~\ref{sec:aaa}, we propose our algorithm and provide the explicit convergence rate of our algorithm. 
In this section, we will validate the super quadratic convergence rate of our algorithm by experiments.
We will conduct the experiment on the widely used logistic regression and elastic net regression.
Furthermore, we will compare our algorithm with the \texttt{AA-I} \citep{fang2009two}, \texttt{AA-II} \citep{anderson1965iterative} and \texttt{AA-Safe} \citep{zhang2020globally}. 
For these variants of Anderson's acceleration, we choose memory $m = 10$ and initialize $\mB_0 = \mJ(\vx_0)$ with $\vx_0$ being a Gaussian random vector normalized to $\norm{\vx_0} = 1$.
The hyper-parameters of \texttt{AA-safe} follow from the setting of \citet{zhang2020globally}.

\paragraph{Regularized Logistic Regression.}
We consider the following regularized logistic regression (Reg-Log-Exp) problem:
\begin{equation*}
    \min_{\vx\in\sR^n} f(\vx) := \frac{1}{m}\sum_{i=1}^m \log\left(1+e^{b_i \vx^\top \va_i}\right)+\frac{\mu}{2}\norm{\vx}_2^2.
\end{equation*}
We use UCI Mushrooms dataset where $m=8124, n=112$, and scale data point $\mA = [\va_1\dots, \va_m]^\top$ with $\norm{\va_i}=1, b_i=\pm{1}$. We adopt
\[ \mu = 0.01, \norm{\vx_0}=1, \eta = \frac{2}{L+\mu}, \mbox{ with } L:=\norm{\mA}_2^2/(4m), \]
and solve the non-linear equation problem:
\[  \mF(\vx) = \vx - G(\vx), \mbox{ and } G(\vx) := \vx - \eta \nabla f(\vx). \]
In this problem, it holds that $\mJ_k = \eta \nabla^2 f(\vx_k)$ which is a positive definite matrix.
However, in our algorithm, $\mB_k$ is commonly asymmetric.

\begin{figure}[t]
	\centering
	\begin{subfigure}[b]{0.49\textwidth}
		\includegraphics[width=\linewidth]{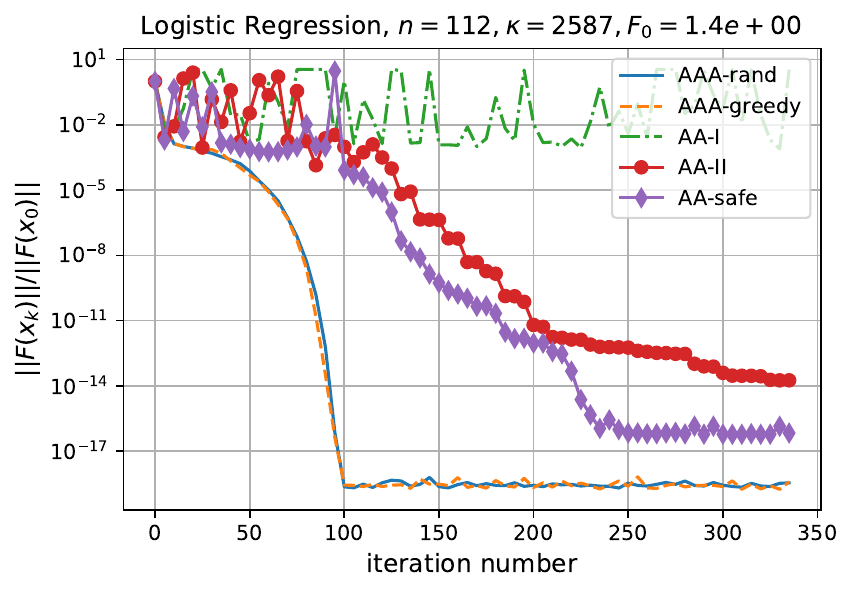}
		\caption{Regularized Logistic Regression.}\label{subfig:a}
	\end{subfigure}
	\begin{subfigure}[b]{0.48\textwidth}
		\includegraphics[width=\linewidth]{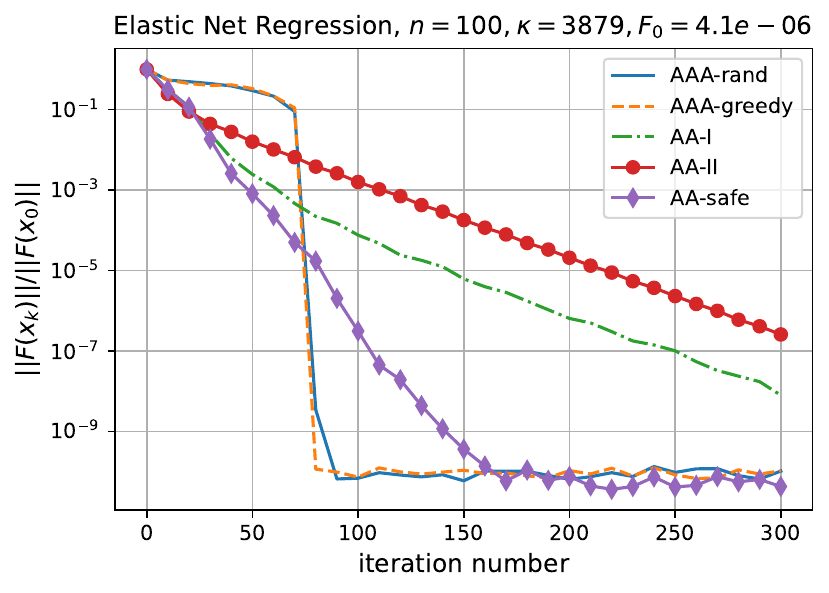}
		\caption{Elastic net regression.} \label{subfig:b}
	\end{subfigure}
	\caption{Numerical experiments.}
	\label{fig:aa-exp}
\end{figure}

\paragraph{Elastic net regression.}
We consider the following elastic net regression (ENR) problem:
\begin{equation*}
    \min_{\vx\in\sR^n} f(\vx) := \frac{1}{2}\norm{\mA\vx-\vb}_2^2 + \mu\left(\frac{1}{4}\norm{\vx}_2^2+\frac{1}{2}\norm{\vx}_1\right),
\end{equation*}
where $\mA\in\RR^{m\times n}$ and $\vb\in\RR^n$ with $m=100, n=100$, and $\mu = 0.001 \mu_{\max}$ with $\mu_{\max} = \norm{\mA^\top \vb}_\infty$ being the smallest value under which the ENR problem admits only the zero solution \citep{o2016conic}.
In our experiment, we use a Gaussian random matrix $\mA$ with each entry $\mA_{i,j} \sim \cN(0, 1)$, and  $\hat{\vx} \in\RR^n$ is a sparse vector with sparsity $0.1$ whose non-zero entries are chosen from the Gaussian distribution. 
We then generate $\vb$ as $\vb = \mA\hat{\vx} + 0.1\vw$, where $\vw$ is also a random Gaussian vector.

Applying ISTA method \citep{daubechies2004iterative} to the ENR problem, we obtain the following iteration scheme:
\begin{equation*}
	\vx_{k+1} = \gS_{\alpha\mu/2}\left(\vx_k - \eta \left(\mA^\top (\mA\vx_k -\vb) + \frac{\mu}{2} \vx_k\right) \right),
\end{equation*}
in which we choose $\eta = 1.8/L$ with $L = \norm{\mA}^2 + \mu/2$ and $\gS_{a}(x)$ is the shrinkage operator, i.e., for $x\in\RR$, it holds that, 
\[ \gS_{a}(x) = \mathrm{sign}(x) \left(|x|-a\right)_+. \]
Accordingly, we try to solve the following non-linear equation problem:
\[ \mF(\vx) = \vx - G(\vx), \mbox{ with } G(\vx) := \gS_{\alpha\mu/2}\left(\vx - \alpha \left(\mA^\top(\mA \vx - \vb) + \frac{\mu}{2} \vx\right)\right).  \]

The experimental results for the previously mentioned problems are depicted in Figure~\ref{fig:aa-exp}. 
The results demonstrate that both the greedy and random versions of our algorithm successfully identify the optimum within $ n$ steps for logistic and elastic net regression.
Figure~\ref{fig:aa-exp} distinctly showcases the super quadratic convergence rate achieved by our \algname. 
This observation corroborates the convergence analysis detailed in Section~\ref{sec:aaa}. 
Additionally, although the super quadratic convergence rate of \algname~is guaranteed under the condition that the problem's Jacobian is Lipschitz continuous, Figure\ref{subfig:b} illustrates that \algname~still attains a rapid convergence rate. 
This could be attributed to the Jacobian being well-defined near the solution for elastic net regression. 
Employing a generalized Jacobian, our method overcomes the challenge of ill-defined problems (lacking Lipschitz continuity) through appropriate initialization, highlighting the robustness and effectiveness of \algname~in achieving fast convergence rates under complex conditions.

\section{Conclusion}

This study introduces a new approach to Anderson's acceleration, designated as \algname, marking a novel iteration of type-I Anderson's acceleration. 
It can also be considered a variation of the Broyden's good method. 
Our algorithm achieves numerical stability without the need for a restart strategy. 
Unlike the existing \texttt{AA-I} variants and quasi-Newton methods that attain a superlinear convergence rate, our \algname~exhibits an $n$-step super quadratic convergence rate, for which we detail the explicit convergence rates for both greedy and random versions. 
These convergence rates introduce new insights into the theories underlying \texttt{AA-I} and quasi-Newton methods, aiding in a deeper understanding of the properties of Anderson's acceleration and quasi-Newton methods. Additionally, our experimental results confirm the accelerated convergence rate of our \algname~method, showcasing its effectiveness and potential to advance the field of numerical optimization.

\appendix

\section{Useful Lemmas}

% \begin{lemma}[Theorem 1 of \citet{rudelson2008invertibility}]\label{lem:gauss_inv}
% Let $\mS$ be a $n\times n$ Gaussian random matrix whose entries are identically and independently drawn from $\cN(0, 1)$.  Then for any $\delta > \alpha / \sqrt{n}$ with $\alpha$ being a constant, it holds that
% \begin{equation}
% 	\mathbb{P}\left( \norm{\mS^{-1}} \leq \alpha \cdot n^{3/2} /\delta \right) \geq 1 - \delta.
% \end{equation}
% \end{lemma}

\begin{lemma}[Theorem 1.2 of \citet{rudelson2008littlewood}]\label{lem:gauss_inv}
Let $\xi_1, \dots, \xi_n$ be independent centered real random variables with variances at least $1$ and subgaussian moments bounded by $B$. Let $\mA$ be an $n\times n$ matrix whose rows are independent copies of
the random vector $(\xi_1, \dots, \xi_n)$. Then for every $\epsilon>0$ one has
\begin{equation}
    \sP\left(s_n(\mA) \leq \epsilon \sqrt{n}\right) \leq c_1\epsilon + c_2^n.
\end{equation}
where $s_n(\mA)=\inf_{\vx: \norm{\vx}=1}\norm{\mA\vx}$, $c_1 > 1$ and $c_2 \in (0, 1)$ depend (polynomially) only on $B$.
\end{lemma}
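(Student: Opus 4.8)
\textbf{Proof plan for Lemma~\ref{lem:gauss_inv}.}
This is exactly Theorem~1.2 of \citet{rudelson2008littlewood}, so strictly speaking it can be cited verbatim; what follows is the plan one would carry out to reconstruct it. The quantity of interest is the smallest singular value $s_n(\mA) = \inf_{\norm{\vx}=1}\norm{\mA\vx}$ of a square matrix $\mA$ whose rows $X_1,\dots,X_n$ are i.i.d.\ copies of a vector with independent, centered, unit-variance, $B$-subgaussian coordinates. The target is the small-ball estimate $\sP(s_n(\mA) \le \eps\sqrt n) \le c_1\eps + c_2^n$. The plan is to combine a net argument on the unit sphere with a distance-to-a-hyperplane reformulation and the Littlewood--Offord / small-ball machinery for individual linear forms.

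First I would reduce the event $\{s_n(\mA)\le\eps\sqrt n\}$ to the existence of a unit vector $\vx$ with $\norm{\mA\vx}\le\eps\sqrt n$, and split the sphere into \emph{compressible} and \emph{incompressible} vectors (following Rudelson--Vershynin). On the compressible part, a standard $\eps$-net of size $e^{O(n)}$ combined with the subgaussian tail $\sP(\norm{\mA\vx}\le c\sqrt n)\le e^{-cn}$ for a fixed $\vx$ gives a bound of the form $c_2^n$; this is where the $c_2\in(0,1)$ term comes from and where the dependence on $B$ enters through the subgaussian concentration. On the incompressible part, the key identity is that for any incompressible $\vx$, $\norm{\mA\vx} \gtrsim \frac{1}{\sqrt n}\sum_{k}\mathrm{dist}(X_k, H_k)\,|x_{i_k}|$-type bounds, so that $s_n(\mA) \gtrsim n^{-1/2}\min_k \mathrm{dist}(X_k, \mathrm{span}(X_j: j\ne k))$; hence it suffices to lower-bound a single such distance.

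Next I would estimate $\sP(\mathrm{dist}(X_1, H)\le t)$ for $H = \mathrm{span}(X_2,\dots,X_n)$ a random hyperplane independent of $X_1$. Conditioning on $H$ with unit normal $\va=(a_1,\dots,a_n)$, this distance equals $|\langle X_1,\va\rangle| = |\sum_j a_j \xi_j|$, a linear form in the independent subgaussian coordinates $\xi_j$ of $X_1$. The Littlewood--Offord--Erd\H{o}s small-ball inequality (in its modern form via the Lévy concentration function and the essential LCD of $\va$) gives $\sup_v \sP(|\sum_j a_j\xi_j - v|\le t) \le C(t + n^{-1/2} + 1/\mathrm{LCD}(\va))$. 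The remaining piece is to show that a random normal vector $\va$ to $n-1$ i.i.d.\ subgaussian rows has large LCD (exponential in $n$) except on an event of probability $c_2^n$ — this is the structural heart of the argument and is itself proved by a net argument over vectors of small LCD. Assembling: off an exceptional event of probability $\le c_2^n$, the single-distance bound is $\le C\eps$, and a union bound over the $n$ choices of deleted row, together with rescaling constants, yields $\sP(s_n(\mA)\le\eps\sqrt n)\le c_1\eps + c_2^n$ with $c_1,c_2$ depending polynomially on $B$.

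The main obstacle is the LCD lower bound for the random normal vector: controlling the probability that $\va$ has small least common denominator requires a delicate net/counting argument (the set of vectors with LCD $\le D$ has a net of controlled cardinality, and each such vector is unlikely to be nearly orthogonal to $n-1$ independent subgaussian rows simultaneously), and it is this step that produces the $c_2^n$ term with its polynomial-in-$B$ constants. Everything else — the compressible/incompressible dichotomy, the distance reformulation, and the scalar small-ball inequality — is by now routine in the random matrix literature. Since the statement is quoted directly from \citet{rudelson2008littlewood}, in the paper itself I would simply invoke it as a black box.
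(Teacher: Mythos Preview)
Your proposal is correct, and in fact it goes well beyond what the paper does: the paper provides no proof whatsoever for this lemma, simply stating it in the appendix as a direct citation of Theorem~1.2 of \citet{rudelson2008littlewood} and invoking it as a black box exactly as you suggest in your final sentence. Your sketch of the compressible/incompressible decomposition, the distance-to-hyperplane reformulation, and the LCD-based small-ball argument is an accurate outline of the Rudelson--Vershynin proof, but none of it appears in the paper.
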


\bibliographystyle{plainnat}
\bibliography{reference}

\begin{thebibliography}{41}
\providecommand{\natexlab}[1]{#1}
\providecommand{\url}[1]{\texttt{#1}}
\expandafter\ifx\csname urlstyle\endcsname\relax
  \providecommand{\doi}[1]{doi: #1}\else
  \providecommand{\doi}{doi: \begingroup \urlstyle{rm}\Url}\fi

\bibitem[Anderson(1965)]{anderson1965iterative}
Donald~G Anderson.
\newblock Iterative procedures for nonlinear integral equations.
\newblock \emph{Journal of the ACM (JACM)}, 12\penalty0 (4):\penalty0 547--560,
  1965.

\bibitem[Bertsekas(2012)]{bertsekas2012dynamic}
Dimitri Bertsekas.
\newblock \emph{Dynamic programming and optimal control: Volume I}, volume~4.
\newblock Athena scientific, 2012.

\bibitem[Brezinski et~al.(2018)Brezinski, Redivo-Zaglia, and
  Saad]{brezinski2018shanks}
Claude Brezinski, Michela Redivo-Zaglia, and Yousef Saad.
\newblock Shanks sequence transformations and anderson acceleration.
\newblock \emph{SIAM Review}, 60\penalty0 (3):\penalty0 646--669, 2018.

\bibitem[Briceno-Arias and Combettes(2013)]{briceno2013monotone}
Luis~M Briceno-Arias and Patrick~L Combettes.
\newblock Monotone operator methods for nash equilibria in non-potential games.
\newblock In \emph{Computational and Analytical Mathematics: In Honor of
  Jonathan Borwein's 60th Birthday}, pages 143--159. Springer, 2013.

\bibitem[Broyden(1965)]{broyden1965class}
Charles~G Broyden.
\newblock A class of methods for solving nonlinear simultaneous equations.
\newblock \emph{Mathematics of computation}, 19\penalty0 (92):\penalty0
  577--593, 1965.

\bibitem[Broyden et~al.(1973)Broyden, Dennis~Jr, and
  Mor{\'e}]{broyden1973local}
Charles~George Broyden, John~E Dennis~Jr, and Jorge~J Mor{\'e}.
\newblock On the local and superlinear convergence of quasi-{N}ewton methods.
\newblock \emph{IMA Journal of Applied Mathematics}, 12\penalty0 (3):\penalty0
  223--245, 1973.

\bibitem[Burdakov and Felgenhauer(2005)]{burdakov2005stable}
Oleg Burdakov and Ursula Felgenhauer.
\newblock Stable multipoint secant methods with released requirements to points
  position.
\newblock In \emph{System Modelling and Optimization: Proceedings of the 16th
  IFIP-TC7 Conference, Compi{\`e}gne, France—July 5--9, 1993}, pages
  225--236. Springer, 2005.

\bibitem[Burdakov and Kamandi(2018)]{burdakov2018multipoint}
Oleg Burdakov and Ahmad Kamandi.
\newblock Multipoint secant and interpolation methods with nonmonotone line
  search for solving systems of nonlinear equations.
\newblock \emph{Applied Mathematics and Computation}, 338:\penalty0 421--431,
  2018.

\bibitem[Cohen(1972)]{cohen1972rate}
Arthur~I Cohen.
\newblock Rate of convergence of several conjugate gradient algorithms.
\newblock \emph{SIAM Journal on Numerical Analysis}, 9\penalty0 (2):\penalty0
  248--259, 1972.

\bibitem[Daubechies et~al.(2004)Daubechies, Defrise, and
  De~Mol]{daubechies2004iterative}
Ingrid Daubechies, Michel Defrise, and Christine De~Mol.
\newblock An iterative thresholding algorithm for linear inverse problems with
  a sparsity constraint.
\newblock \emph{Communications on Pure and Applied Mathematics: A Journal
  Issued by the Courant Institute of Mathematical Sciences}, 57\penalty0
  (11):\penalty0 1413--1457, 2004.

\bibitem[Dennis~Jr and Schnabel(1996)]{dennis1996numerical}
John~E Dennis~Jr and Robert~B Schnabel.
\newblock \emph{Numerical methods for unconstrained optimization and nonlinear
  equations}.
\newblock SIAM, 1996.

\bibitem[Fang and Saad(2009)]{fang2009two}
Haw-ren Fang and Yousef Saad.
\newblock Two classes of multisecant methods for nonlinear acceleration.
\newblock \emph{Numerical linear algebra with applications}, 16\penalty0
  (3):\penalty0 197--221, 2009.

\bibitem[Gay and Schnabel(1978)]{gay1978solving}
David~M Gay and Robert~B Schnabel.
\newblock Solving systems of nonlinear equations by broyden's method with
  projected updates.
\newblock In \emph{Nonlinear Programming 3}, pages 245--281. Elsevier, 1978.

\bibitem[Hart and Soesianto(1992)]{hart1992solution}
WE~Hart and F~Soesianto.
\newblock On the solution of highly structured nonlinear equations.
\newblock \emph{Journal of computational and applied mathematics}, 40\penalty0
  (3):\penalty0 285--296, 1992.

\bibitem[Hu et~al.(2022)Hu, Tian, Pan, and Wen]{hu2022local}
Jiang Hu, Tonghua Tian, Shaohua Pan, and Zaiwen Wen.
\newblock On the local convergence of the semismooth newton method for
  composite optimization.
\newblock \emph{arXiv preprint arXiv:2211.01127}, 2022.

\bibitem[Jin and Mokhtari(2023)]{jin2020non}
Qiujiang Jin and Aryan Mokhtari.
\newblock Non-asymptotic superlinear convergence of standard quasi-newton
  methods.
\newblock \emph{Mathematical Programming}, 200\penalty0 (1):\penalty0 425--473,
  2023.

\bibitem[Jin et~al.(2022)Jin, Koppel, Rajawat, and Mokhtari]{jin2022sharpened}
Qiujiang Jin, Alec Koppel, Ketan Rajawat, and Aryan Mokhtari.
\newblock Sharpened quasi-newton methods: Faster superlinear rate and larger
  local convergence neighborhood.
\newblock In \emph{International Conference on Machine Learning}, pages
  10228--10250. PMLR, 2022.

\bibitem[Kane and Nelson(2014)]{kane2014sparser}
Daniel~M Kane and Jelani Nelson.
\newblock Sparser johnson-lindenstrauss transforms.
\newblock \emph{Journal of the ACM (JACM)}, 61\penalty0 (1):\penalty0 1--23,
  2014.

\bibitem[Kelley(2003)]{kelley2003solving}
Carl~T Kelley.
\newblock \emph{Solving nonlinear equations with Newton's method}.
\newblock SIAM, 2003.

\bibitem[Kelley(2018)]{kelley2018numerical}
Carl~T Kelley.
\newblock Numerical methods for nonlinear equations.
\newblock \emph{Acta Numerica}, 27:\penalty0 207--287, 2018.

\bibitem[Lin et~al.(2021)Lin, Ye, and Zhang]{lin2021faster}
Dachao Lin, Haishan Ye, and Zhihua Zhang.
\newblock Greedy and random quasi-newton methods with faster explicit
  superlinear convergence.
\newblock \emph{Advances in Neural Information Processing Systems}, 34, 2021.

\bibitem[Lin et~al.(2022)Lin, Ye, and Zhang]{lin2022explicit}
Dachao Lin, Haishan Ye, and Zhihua Zhang.
\newblock Explicit convergence rates of greedy and random quasi-newton methods.
\newblock \emph{Journal of Machine Learning Research}, 23\penalty0
  (162):\penalty0 1--40, 2022.

\bibitem[Lindenstrauss and Johnson(1984)]{lindenstrauss1984extensions}
W~Johnson~J Lindenstrauss and J~Johnson.
\newblock Extensions of lipschitz maps into a hilbert space.
\newblock \emph{Contemp. Math}, 26\penalty0 (189-206):\penalty0 2, 1984.

\bibitem[Liu et~al.(2017)Liu, Bouaziz, and Kavan]{liu2017quasi}
Tiantian Liu, Sofien Bouaziz, and Ladislav Kavan.
\newblock Quasi-newton methods for real-time simulation of hyperelastic
  materials.
\newblock \emph{Acm Transactions on Graphics (TOG)}, 36\penalty0 (3):\penalty0
  1--16, 2017.

\bibitem[Mart{\i}nez(2000)]{martinez2000practical}
Jos{\'e}~Mario Mart{\i}nez.
\newblock Practical quasi-{N}ewton methods for solving nonlinear systems.
\newblock \emph{Journal of Computational and Applied Mathematics}, 124\penalty0
  (1-2):\penalty0 97--121, 2000.

\bibitem[Nocedal and Wright(2006)]{nocedal2006numerical}
Jorge Nocedal and Stephen Wright.
\newblock \emph{Numerical optimization}.
\newblock Springer Science \& Business Media, 2006.

\bibitem[O’donoghue et~al.(2016)O’donoghue, Chu, Parikh, and
  Boyd]{o2016conic}
Brendan O’donoghue, Eric Chu, Neal Parikh, and Stephen Boyd.
\newblock Conic optimization via operator splitting and homogeneous self-dual
  embedding.
\newblock \emph{Journal of Optimization Theory and Applications}, 169:\penalty0
  1042--1068, 2016.

\bibitem[Rodomanov and Nesterov(2021{\natexlab{a}})]{rodomanov2021greedy}
Anton Rodomanov and Yurii Nesterov.
\newblock Greedy quasi-{N}ewton methods with explicit superlinear convergence.
\newblock \emph{SIAM Journal on Optimization}, 31\penalty0 (1):\penalty0
  785--811, 2021{\natexlab{a}}.

\bibitem[Rodomanov and Nesterov(2021{\natexlab{b}})]{rodomanov2021new}
Anton Rodomanov and Yurii Nesterov.
\newblock New results on superlinear convergence of classical quasi-{N}ewton
  methods.
\newblock \emph{Journal of optimization theory and applications}, 188\penalty0
  (3):\penalty0 744--769, 2021{\natexlab{b}}.

\bibitem[Rodomanov and Nesterov(2021{\natexlab{c}})]{rodomanov2021rates}
Anton Rodomanov and Yurii Nesterov.
\newblock Rates of superlinear convergence for classical quasi-{N}ewton
  methods.
\newblock \emph{Mathematical Programming}, pages 1--32, 2021{\natexlab{c}}.

\bibitem[Rudelson and Vershynin(2008)]{rudelson2008littlewood}
Mark Rudelson and Roman Vershynin.
\newblock The littlewood--offord problem and invertibility of random matrices.
\newblock \emph{Advances in Mathematics}, 218\penalty0 (2):\penalty0 600--633,
  2008.

\bibitem[Schaefer et~al.(2015)Schaefer, Alireza~Ghasemi, Roy, and
  Goedecker]{schaefer2015stabilized}
Bastian Schaefer, S~Alireza~Ghasemi, Shantanu Roy, and Stefan Goedecker.
\newblock Stabilized quasi-newton optimization of noisy potential energy
  surfaces.
\newblock \emph{The Journal of chemical physics}, 142\penalty0 (3), 2015.

\bibitem[Shi et~al.(2019)Shi, Song, Wu, Hsu, Wu, and Huang]{shi2019regularized}
Wenjie Shi, Shiji Song, Hui Wu, Ya-Chu Hsu, Cheng Wu, and Gao Huang.
\newblock Regularized anderson acceleration for off-policy deep reinforcement
  learning.
\newblock \emph{Advances in Neural Information Processing Systems}, 32, 2019.

\bibitem[Toth and Kelley(2015)]{toth2015convergence}
Alex Toth and CT~Kelley.
\newblock Convergence analysis for {A}nderson acceleration.
\newblock \emph{SIAM Journal on Numerical Analysis}, 53\penalty0 (2):\penalty0
  805--819, 2015.

\bibitem[Wainwright(2019)]{wainwright2019high}
Martin~J Wainwright.
\newblock \emph{High-dimensional statistics: A non-asymptotic viewpoint},
  volume~48.
\newblock Cambridge university press, 2019.

\bibitem[Walker and Ni(2011)]{walker2011anderson}
Homer~F Walker and Peng Ni.
\newblock Anderson acceleration for fixed-point iterations.
\newblock \emph{SIAM Journal on Numerical Analysis}, 49\penalty0 (4):\penalty0
  1715--1735, 2011.

\bibitem[Willert et~al.(2014)Willert, Taitano, and
  Knoll]{willert2014leveraging}
Jeffrey Willert, William~T Taitano, and Dana Knoll.
\newblock Leveraging anderson acceleration for improved convergence of
  iterative solutions to transport systems.
\newblock \emph{Journal of Computational Physics}, 273:\penalty0 278--286,
  2014.

\bibitem[Xiao et~al.(2018)Xiao, Li, Wen, and Zhang]{xiao2018regularized}
Xiantao Xiao, Yongfeng Li, Zaiwen Wen, and Liwei Zhang.
\newblock A regularized semi-smooth newton method with projection steps for
  composite convex programs.
\newblock \emph{Journal of Scientific Computing}, 76:\penalty0 364--389, 2018.

\bibitem[Ye et~al.(2021)Ye, Lin, and Zhang]{ye2021greedy}
Haishan Ye, Dachao Lin, and Zhihua Zhang.
\newblock Greedy and random broyden's methods with explicit superlinear
  convergence rates in nonlinear equations.
\newblock \emph{arXiv preprint arXiv:2110.08572}, 2021.

\bibitem[Ye et~al.(2023)Ye, Lin, Chang, and Zhang]{ye2021explicit}
Haishan Ye, Dachao Lin, Xiangyu Chang, and Zhihua Zhang.
\newblock Towards explicit superlinear convergence rate for sr1.
\newblock \emph{Mathematical Programming}, 199\penalty0 (1):\penalty0
  1273--1303, 2023.

\bibitem[Zhang et~al.(2020)Zhang, O'Donoghue, and Boyd]{zhang2020globally}
Junzi Zhang, Brendan O'Donoghue, and Stephen Boyd.
\newblock Globally convergent type-{I} anderson acceleration for nonsmooth
  fixed-point iterations.
\newblock \emph{SIAM Journal on Optimization}, 30\penalty0 (4):\penalty0
  3170--3197, 2020.

\end{thebibliography}
\end{document}